\definecolor{myurlcolor}{rgb}{0,0,0.7}
\definecolor{myrefcolor}{rgb}{0.8,0,0}
\newcommand{\beq}{\begin{displaymath}}
\newcommand{\eeq}{\end{displaymath}}
\newcommand{\beqn}{\begin{equation}}
\newcommand{\eeqn}{\end{equation}}
\newcommand{\beqa}{\begin{eqnarray*}}
\newcommand{\eeqa}{\end{eqnarray*}}
\newcommand{\beqna}{\begin{eqnarray}}
\newcommand{\eeqna}{\end{eqnarray}}
\newcommand{\eq}[1]{~(\ref{#1})}
\newcommand{\N}{\mathbb{N}}
\newcommand{\Z}{\mathbb{Z}}
\newcommand{\R}{\mathbb{R}}
\newcommand{\C}{\mathbb{C}}
\newcommand{\F}{\mathbb{F}}
\renewcommand{\H}{\mathcal{H}}
\newcommand{\B}{\mathcal{B}}
\newcommand{\id}{\mathrm{id}}
\newcommand{\ra}{\rightarrow}
\newcommand{\lra}{\longrightarrow}
\newcommand{\tr}{\mathrm{tr}}
\newtheorem{prop}{Proposition}[section]
\newtheorem{thm}[prop]{Theorem}
\newtheorem{defn}[prop]{Definition}
\newtheorem{cor}[prop]{Corollary}
\newtheorem{lem}[prop]{Lemma}
\theoremstyle{definition} 
\newtheorem{ex}[prop]{Example}
\newtheorem{rem}[prop]{Remark}
\title{Operator system structures on the unital direct sum of $C^*$-algebras}
\author{Tobias Fritz\\[.3cm]
\small{ICFO--Institut de Ciencies Fotoniques, Mediterranean Technology Park, 08860 Castelldefels (Barcelona), Spain}\\[.3cm]
\small{\texttt{tobias.fritz@icfo.es}}}
\begin{document}

\maketitle

\begin{abstract}
This work is motivated by R\u{a}dulescu's result~\cite{Radu} on the comparison of $C^*$-tensor norms on $C^*(\F_n)\otimes C^*(\F_n)$.

For unital $C^*$-algebras $A$ and $B$, there are natural inclusions of $A$ and $B$ into the unital free product $A\ast_1 B$, the maximal tensor product $A\otimes_{\max} B$, and the minimal tensor product $A\otimes_{\min} B$. These inclusions define three operator system structures on the sum $A+B$. Partly using ideas from quantum entanglement theory, we prove various interrelations between these three operator systems. As an application, the present results yield a significant improvement over R\u{a}dulescu's bound. At the same time, this tight comparison is so general that it cannot be regarded as evidence for the QWEP conjecture.
\end{abstract}

\section{Introduction}

Connes' embedding problem~\cite{Con,Cap} is one of the most important open problems in operator algebra theory. It asks whether any separable von Neumann algebra factor of type $II_1$ can be embedded into $R^\omega$, which is the ultrapower (with respect to an arbitrary free ultrafilter on $\N$) of $R$, the hyperfinite $II_1$ factor.

The importance of Connes' embedding problem manifests itself in the large number of equivalent formulations known. Among these range several reformulations due to Kirchberg~\cite{Kir,Oza}, for example the \emph{QWEP conjecture}, which asks whether every $C^*$-algebra is a \emph{Q}uotient of one having the \emph{W}eak \emph{E}xpectation \emph{P}roperty. More recently, also questions in quantum information theory~\cite{HM,Jun,Fri} have been found to be equivalent. In this paper, we are going to consider the QWEP conjecture in the following formulation also due to Kirchberg: let $C^*(\F_n)$ be the full group $C^*$-algebra of $\F_n$, the free group on any finite number $n\geq 2$ of generators. Then the QWEP conjecture asks whether its $C^*$-tensor product with itself is unique:
\beqn
\label{qwep}
\textrm{QWEP conjecture: }\quad C^*(\F_n)\otimes_{\max} C^*(\F_n)\stackrel{?}{=}C^*(\F_n)\otimes_{\min} C^*(\F_n)
\eeqn
Recall~\cite{KR2} that one way to define the two sides of~(\ref{qwep}) is as the completions of the group algebra $\C[\F_n\times\F_n]$ with respect to the two norms
$$
||x||_{\max} = \sup_{\pi:\F_2\times\F_2\to\mathcal{U}(\H)} ||\pi(x)|| ,\qquad ||x||_{\min} = \sup_{\pi_a,\pi_b:\F_2\to\mathcal{U}(\H)} ||(\pi_a\otimes\pi_b)(x)||, \qquad x\in\C[\F_n\times\F_n],
$$
where the supremum ranges over all unitary representations of $\F_n\times\F_n$ and pairs of unitary representations of $\F_n$, respectively. With this in mind,~(\ref{qwep}) asks whether whether $||x||_{\max}\stackrel{?}{=}||x||_{\min}$ for every $x\in\C[\F_n\times\F_n]$, i.e.~whether unitary representations of $\F_n\times\F_n$ can be approximated by those of tensor product form in the appropriate sense.

If $U_1,\ldots,U_n$ stand for the generators of $\F_n$, then Pisier~\cite[p.6]{Pis2} has shown that it is sufficient to consider in $\C[\F_n\times\F_n]$ the linear span of all elementary tensors of the form
\beqn
\label{pisierradsub}
\mathbbm{1}\otimes\mathbbm{1},\quad U_i\otimes\mathbbm{1},\quad\mathbbm{1}\otimes U_j
\eeqn
in the following sense: the question~(\ref{qwep}) is equivalent to asking whether the two operator space structures induced from the embeddings into the two sides of~(\ref{qwep}) on this $(2n+1)$-dimensional subspace coincide completely isometrically. R\u{a}dulescu~\cite{Radu} has considered this formulation and deduced the following: whenever $X$ is any matrix with entries in this linear space, then
\beqn
\label{radres}
||X||_{\max}\leq\left(N^2-N\right)^{1/2}||X||_{\min}
\eeqn
with $N=2n+1$. In particular, the two operator space structures induced on this subspace from the two sides of~(\ref{qwep}) are completely isomorphic.

This paper follows this line of attack and establishes tight bounds on the comparison between $||X||_{\max}$ and $||X||_{\min}$. The recently found connection~\cite{Jun,Fri} between the QWEP conjecture and Tsirelson's problem on quantum correlations~\cite{SW} has allowed us to gain a certain physical intuition for~(\ref{qwep}). We use this intuition to derive, among other things, a rigorous improvement over~(\ref{radres}) which reduces the constant on the right-hand side to $2$, independently of $n$. For the precise statement, see section~\ref{qwepsec}.

In fact, the methods we use are completely general. We work with arbitrary unital $C^*$-algebras $A$ and $B$ and consider their ``unital direct sum'', by which we mean the linear space
\beq
A\otimes\mathbbm{1}+\mathbbm{1}\otimes B\subseteq A\otimes_{\mathrm{alg}} B
\eeq
equipped with the operator system structures induced on it by its natural embeddings into $A\otimes_{\min}B$, $A\otimes_{\max}B$ and the unital free product $A\ast_1 B$. (See definition~\ref{os} for the notion of operator system.) These three operator systems will be denoted by $A\oplus_{\min}B$, $A\oplus_{\max}B$ and $A\oplus_\ast B$, respectively. Following Pisier's trick~\cite{Pis2}, we can conclude that $A\otimes_{\min}B=A\otimes_{\max}B$ if and only if $A\oplus_{\min}B=A\oplus_{\max}B$ completely positively. We derive general results on the comparison between these three operator systems.

\paragraph{Summary.} Section~\ref{sec2} begins the main text by introducing the three operator systems $A\oplus_\ast B$, $A\oplus_{\max}B$ and $A\oplus_{\min}B$ which will be considered and contains derivations of some of their basic properties. Section~\ref{sec3} proceeds by generally constructing coproducts of operator systems, and shows that the coproduct of two unital $C^*$-algebras in the category of operator systems coincides with $A\oplus_{\ast}B$. This yields a characterization of the positive matrix cones of the operator system $A\oplus_\ast B$. Section~\ref{sec4} discusses some basic entanglement theory for states on tensor products of $C^*$-algebras. A quantum marginal problem is introduced there which is then applied to characterize the positive matrix cones of $A\oplus_{\max}B$ and $A\oplus_{\min}B$ in section~\ref{sec5}. Section~\ref{sec5} also contains the general statements and proofs of our main results, including a simple but powerful observation applicable to the comparison of norms on tensor products of $\Z_2$-graded $C^*$-algebras. Finally, section~\ref{qwepsec} then briefly states our results for the particular case $A=B=C^*(\F_n)$.

\paragraph{Notation and conventions.} Throughout the paper, $A$ and $B$ are arbitrary unital $C^*$-algebras. No separability assumption is needed. We always identify $A$ and $B$ with their images under the natural inclusions into a unital free product or a tensor product. For a unital $C^*$-algebra $A$, we write $\mathscr{S}(A)$ for its state space, and similarly $\mathscr{S}(S)$ for the state space of an operator system $S$. The topology used on $\mathscr{S}(A)$ is always going to be the weak $*$-topology. We abbreviate ``unital completely positive'' by ``ucp'' and $M_k(\C)$ by $M_k$. Often, it is better to think of a matrix algebra $M_k(A)$ as $M_k\otimes A$; these two notations will be used interchangeably. The minimal (``spatial'') and maximal tensor products of $C^*$-algebras are written as $A\otimes_{\min}B$ and $A\otimes_{\max}B$, respectively. We take $A\dot{\otimes}B$ to stand for some arbitrary $C^*$-algebraic tensor product.

While some familiarity with tensor products of $C^*$-algebras, operator systems and operator spaces is necessary for understanding of the results and their proofs, we have tried to include as much detail as possible. The main ideas are sufficiently simple that they should be understandable without an excessive background in functional analysis. We refer to~\cite{CE,Paul} for background on operator systems.

\paragraph{Acknowledgments.} The author would like to thank Edward Effros, Carlos Palazuelos and Vern Paulsen for helpful correspondence as well as Anthony Leverrier for discussion on entangled states in infinite-dimensional Hilbert spaces. This work has been supported by the EU STREP QCS.

\section{The unital direct sum of $C^*$-algebras as an operator system}
\label{sec2}

We begin with some basic observations in order to set the stage for the upcoming sections. So let $A$ and $B$ be unital $C^*$-algebras. Then there are natural embeddings of $A$ into
\begin{itemize}
\item the unital free product, $A\lra A\ast_1 B,\quad a\mapsto a$,
\item the maximal tensor product, $A\lra A\otimes_{\max}B.\quad a\mapsto a\otimes\mathbbm{1}$,
\item the minimal tensor product, $A\lra A\otimes_{\min}B,\quad a\mapsto a\otimes\mathbbm{1}$,
\end{itemize}
and likewise for $B$. The images of these maps are simply isomorphic copies of $A$ and $B$, respectively, and we will not distinguish between $A$ (resp.~$B$) and its isomorphic image. In each case, the linear hull of $A$ and $B$ together consists of the elements which are sums of the form $a+b$; in the tensor product case, such a sum actually stands for
\beq
a\otimes\mathbbm{1}+\mathbbm{1}\otimes b,
\eeq
but we will stick to the sloppy notation $a+b$. The splitting of $a+b$ into $a\in A$ and $b\in B$ is not unique, since any scalar can be added to $a$ and removed from $b$, while their sum stays the same. However, this is the only ambiguity which exists; more formally, we claim that all three sums $A+B$ are isomorphic, as $\C$-vector spaces, to the quotient vector space
\beqn
\label{uds}
\left(A\oplus B\right)/\C(\mathbbm{1}_A-\mathbbm{1}_B)\:,
\eeqn
which we also call a ``unital direct sum''. To see this, note that the units are identified as $\mathbbm{1}_A=\mathbbm{1}_B$ in each of the three cases, so that the sum $A+B$ is~(\ref{uds}) or some quotient thereof. But it cannot be a proper quotient since any pair of states $\alpha\in\mathscr{S}(A)$, $\beta\in\mathscr{S}(B)$ can be extended to a state on $A\ast_1 B$ (resp. $A\otimes_{\max}B$ or $A\otimes_{\min}B$), thereby separating everything else.

We will now consider this unital direct sum equipped with different structures of \emph{operator system}:

\begin{defn}[\cite{CE}]
\label{os}
Let $S$ be a complex vector space. $S$ is an \emph{operator system} if it comes equipped with a distinguished element $\mathbbm{1}\in S$ (the \emph{unit}), an antilinear involution $^*:S\to S$ with $\mathbbm{1}^*=\mathbbm{1}$, and, upon writing $M_k(S)_h=\{s\in M_k(S)\:|\: s^*=s\}$, a distinguished convex cone $M_k(S)_+\subseteq M_k(S)_h$ for every $k\in\N$ such that
\begin{enumerate}
\item The distinguished cones are salient: if $s\in M_k(S)_+$ and $-s\in M_k(S)_+$, then $s=0$,
\item For every $k\in\N$, the matrix $\mathbbm{1}_k=\mathrm{diag}(\mathbbm{1},\ldots,\mathbbm{1})\in M_k(S)$ is an \emph{order unit}: for every $s\in M_k(S)_h$, there is some $\lambda\in\R_{>0}$ with $(\lambda\mathbbm{1}_k-s)\in M_k(S)_+$,
\item For every $k\in\N$, the order unit $\mathbbm{1}_k$ is \emph{Archimedean}: if $(s+\lambda\mathbbm{1}_k)\in M_k(S)_+$ for all $\lambda\in\R_{>0}$, then $s\in M_k(S)_+$,
\item $\gamma^* M_k(S)_+ \gamma \subseteq M_l(S)_+$ for all $k,l\in\N$ and $\gamma\in M_{k,l}(\C)$.
\end{enumerate}
\end{defn}

For $s\in M_k(S)$, we also write $s\geq 0$ as synonymous to $s\in M_k(S)_+$. Every self-adjoint subspace of $\B(\H)$ naturally carries an operator system structure. It is a fundamental theorem of operator system theory~\cite{CE} that every operator system is of this form.

Every $C^*$-algebra is naturally an operator system, and this in particular applies to the three $C^*$-algebras mentioned above. Therefore, the unital direct sum~(\ref{uds}) inherits three operator system structures which we denote by, respectively,
\begin{align}
\label{opsyss}
\notag A\oplus_\ast B &\subset A\ast_1 B\:,\\
A\oplus_{\max} B &\subset A\otimes_{\max}B\:,\\
\notag A\oplus_{\min} B &\subset A\otimes_{\min}B\:.
\end{align}
Since~(\ref{uds}) is ``almost'' a direct sum, the most appropriate symbol for it seems to be a modified version of $\oplus$, which is what we chose. Contrary to what this notation may suggest, $A\oplus_{\max}B$ is not a maximal direct sum in whatever sense, and likewise for $A\oplus_{\min}B$. More generally, when $A\dot{\otimes}B$ is any $C^*$-tensor product of $A$ and $B$, we obtain an induced operator system structure on~(\ref{uds}) which is denoted by $A\dot{\oplus}B$. We will use this notation mainly for statements which apply to both $A\oplus_{\min}B$ and $A\oplus_{\max}B$.

Given operator systems $S$ and $T$, a \emph{ucp map} $\Phi:S\to T$ is a linear map which is unital, i.e.~$\Phi(\mathbbm{1}_S)=\mathbbm{1}_T$, and completely positive, i.e.~$\Phi(M_k(S)_+)\subseteq M_k(T)_+$ for all $k$ and with respect to the entrywise application of $\Phi$.

There are simple ucp relationships between the above operator systems. The natural inclusions $A\ra A\otimes_{\max}B$ and $B\ra A\otimes_{\max}B$ yield, by the universal property of $A\ast_1 B$, a $*$-homomorphism
\beq
A\ast_1 B\lra A\otimes_{\max}B\:.
\eeq
This is a quotient map which, morally, takes the unital free product and throws in relations stating that all elements of $A$ commute with those of $B$. Similarly, there is a natural projection
\beq
A\otimes_{\max}B\lra A\otimes_{\min}B\:.
\eeq
coming from the universal property of $A\otimes_{\max}B$.

Both these natural projections restrict to ucp maps between the operator system~(\ref{opsyss}). This can be concisely expressed in terms of the commutative diagram
\beqn
\begin{split}
\label{basic}
\xymatrix{ A\oplus_\ast B \ar[r]^(.45){\mathrm{ucp}} \ar@{^{(}->}[d] & A\oplus_{\max}B \ar[r]^{\mathrm{ucp}} \ar@{^{(}->}[d]  & A\oplus_{\min}B \ar@{^{(}->}[d] \\
A\ast_1 B \ar[r] & A\otimes_{\max}B \ar[r] & A\otimes_{\min}B}
\end{split}
\eeqn
which should always be kept in mind while reading the paper. Everything which follows from here on will be concerned with deriving further properties of these two ucp maps.

The following trick due to Pisier is an essential motivation for considering these operator systems:

\begin{prop}[\cite{Pis2}]
\label{onlyplus}
$A\oplus_{\max}B=A\oplus_{\min}B$ if and only if $A\otimes_{\min}B=A\otimes_{\max}B$.
\end{prop}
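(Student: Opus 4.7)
The ``only if'' direction is immediate: if $A\otimes_{\max}B = A\otimes_{\min}B$ as $C^*$-algebras, then the two operator system structures they induce on the common subspace $A+B$ coincide. All the content lies in the converse.

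For the converse, assume $A\oplus_{\max}B = A\oplus_{\min}B$ and embed $A\otimes_{\max}B$ faithfully into some $\B(\H)$. The composite $A+B \hookrightarrow A\otimes_{\max}B \hookrightarrow \B(\H)$ is ucp with respect to the $\max$ operator system structure on $A+B$, and by hypothesis equally with respect to the $\min$ structure. Viewing it as a ucp map from $A\oplus_{\min}B \subset A\otimes_{\min}B$ into $\B(\H)$, Arveson's extension theorem yields a ucp extension $\Phi: A\otimes_{\min}B \to \B(\H)$.

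By construction, $\Phi|_A$ and $\Phi|_B$ are the $*$-homomorphic inclusions of $A$ and $B$ into $\B(\H)$. Choi's multiplicative-domain theorem therefore places all of $A$ and all of $B$ in the multiplicative domain of $\Phi$, forcing $\Phi(a\otimes b) = \Phi(a)\Phi(b)$ on elementary tensors; hence $\Phi$ is a $*$-homomorphism on $A\otimes_{\mathrm{alg}}B$, and by continuity on $A\otimes_{\min}B$. Since $\Phi(A)$ and $\Phi(B)$ commute inside $\B(\H)$ (they are the commuting copies of $A$ and $B$ in $A\otimes_{\max}B$), the universal property of $\otimes_{\max}$ produces a $*$-homomorphism $\tilde\Phi: A\otimes_{\max}B \to \B(\H)$ with $\tilde\Phi(a\otimes b) = \Phi(a)\Phi(b)$. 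On simple tensors $\tilde\Phi$ agrees with the chosen faithful embedding, so $\tilde\Phi$ is that faithful embedding, and it factors as $\Phi$ composed with the natural quotient $A\otimes_{\max}B \to A\otimes_{\min}B$. Faithfulness of $\tilde\Phi$ then forces the quotient to be injective, so it is a $*$-isomorphism. The crux is the multiplicative-domain step, which is what upgrades an a priori merely ucp extension into a genuine $*$-homomorphism; everything else is formal consequence of Arveson's theorem and the universal property of $\otimes_{\max}$.
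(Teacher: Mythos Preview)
Your argument is correct and is essentially the paper's own proof: Arveson extension of the inclusion $A\oplus_{\min}B\hookrightarrow\B(\H)$, followed by Choi's multiplicative-domain theorem to upgrade the resulting ucp map to a $*$-homomorphism inverting the canonical quotient. One minor slip: you have the ``if'' and ``only if'' labels swapped---in the statement as written, the easy direction (tensor equality $\Rightarrow$ operator-system equality) is the ``if'' part, and the paper labels it as such.
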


\begin{proof}
The ``if'' part is clear. For the converse, fix an embedding $A\otimes_{\max}B\subseteq\B(\H)$ and extend the canonical ucp map
\beq
A\oplus_{\min}B=A\oplus_{\max}B\lra A\otimes_{\max}B\subseteq\B(\H)
\eeq
to a ucp map $A\otimes_{\min}B\lra\B(\H)$ by the Arveson extension theorem. By construction, both $A$ and $B$ lie in the multiplicative domain~\cite{Choi},~\cite[2.7]{Oza} of this ucp map. Since $A$ and $B$ generate $A\otimes_{\min}B$ as a $C^*$-algebra, we conclude that the map $A\otimes_{\min}B\lra\B(\H)$ itself is multiplicative, and therefore is a $*$-homomorphism $A\otimes_{\min}B\lra A\otimes_{\max}B$ mapping $A$ to $A$ and $B$ to $B$.
\end{proof}

\section{Coproducts of operator systems and $C^*$-algebras}
\label{sec3}

We start this section with a detour on coproducts of operator systems. Throughout the following, we let $S$ and $T$ be arbitrary operator systems. Recall the following basic fact, which follows e.g.~from the representation theorem of operator systems:

\begin{rem}
\label{separate}
$s\in M_k(S)$ is positive if and only if $\phi(s)\geq 0$ for all states $\phi\in\mathscr{S}(M_k(S))$.
\end{rem}

Also for operator systems, it makes sense to consider the unital direct sum 
\beqn
\label{udsos}
S\oplus_1 T\equiv\left(S\oplus T\right)/\C\left(\mathbbm{1}_S-\mathbbm{1}_T\right)
\eeqn
which comes with natural inclusions of $S$ and $T$ and has as its unit the image of $\mathbbm{1}_S$ and $\mathbbm{1}_T$ under these inclusions. We will see soon that there is a certain operator system structure on this unital direct sum which extends those of $S$ and $T$ and is maximal with this property. (Here, we mean ``maximal'' in the sense that the positive matrix cones are as small as possible.) More formally, this unital direct sum will be a coproduct of $S$ and $T$ in the category of operator systems. But before getting to this, we need a definition.

\begin{defn}
Let $\phi\in\mathscr{S}(M_k(S))$ and $\chi\in\mathscr{S}(M_k(T))$ be states. We call $(\phi,\chi)$ a \emph{compatible pair} whenever $\phi_{|M_k}=\chi_{|M_k}$.
\end{defn}

In quantum-mechanical jargon~\cite{Hor}, $\phi$ is a bipartite state shared between a quantum system described by $S$ and an ancilla with state space $\C^k$. Similarly, $\chi$ is a bipartite state shared between a quantum system described by $T$ and the same ancilla. The states are compatible if and only if the reduced state of the ancilla is the same.

Note that for $k=1$, a pair of states is automatically compatible since necessarily $\phi(\mathbbm{1}_S)=1=\chi(\mathbbm{1}_T)$.

\begin{prop}
\label{coprod}
On the unital direct sum $S\oplus_1 T$, define an abstract operator system structure by stipulating that for any\footnote{In a notation like ``$s+t$'', it is understood that $s$ lies in $S$ (respectively $M_k(S)$) while $t$ is assumed to lie in $T$ (respectively $M_k(T)$).} $s+t\in M_k(S\oplus_1 T)$, the involution is given by $(s+t)^*=s^*+t^*$, and $s+t\geq 0$ holds if and only if
\begin{center}
$\phi(s)+\chi(t)\geq 0$ for all compatible pairs $\left(\phi\in\mathscr{S}(M_k(S)),\chi\in\mathscr{S}(M_k(T))\right)$\:.
\end{center}
With this definition and the natural inclusions $S\ra S\oplus_1 T$ and $T\ra S\oplus_1 T$, the operator system $S\oplus_1 T$ is the coproduct of $S$ and $T$ in the category of operator systems with ucp maps.
\end{prop}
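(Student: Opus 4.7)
The plan is to establish two things: first, that the prescription defines a valid operator system structure (axioms (i)--(iv) of Definition~\ref{os}), and second, that it enjoys the universal property of a coproduct. The axiom checks are largely formal; the universal property is the content.

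For the axioms, three are easy. The order unit property follows on choosing a representative $s\in M_k(S), t\in M_k(T)$ and any $\lambda > \|s\|+\|t\|$: every compatible pair $(\phi,\chi)$ then satisfies $\phi(\lambda\mathbbm{1}_k - s) + \chi(-t) = \lambda - \phi(s) - \chi(t) \geq 0$, using compatibility to evaluate $\lambda\mathbbm{1}_k$ consistently across the two sides. The Archimedean axiom is immediate, since the cone condition is an intersection of closed half-spaces $\{\phi(s)+\chi(t) \geq 0\}$. Compression reduces to the observation that if $(\phi',\chi')$ is compatible on $M_l$ and $\gamma \in M_{k,l}(\C)$, then $x \mapsto \phi'(\gamma^* x\gamma)$ and $y \mapsto \chi'(\gamma^* y\gamma)$ still agree on $M_k$, and after rescaling furnish a compatible pair on $M_k$; the degenerate case $\phi'(\gamma^*\gamma) = 0$ is handled by a small perturbation together with the Archimedean property just established. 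Salience is subtler. If $s+t$ and $-(s+t)$ are both positive, inserting product-form compatible pairs $(\omega\otimes\phi_0,\,\omega\otimes\chi_0)$ and varying $\omega$ over $\mathscr{S}(M_k)$ yields $(\mathrm{id}_{M_k}\otimes\phi_0)(s) + (\mathrm{id}_{M_k}\otimes\chi_0)(t) = 0$ for all states $\phi_0$ on $S$ and $\chi_0$ on $T$. Varying $\phi_0$ so that differences $\phi_0 - \phi_0'$ span the codimension-one subspace $\{\lambda \in S^* : \lambda(\mathbbm{1}_S) = 0\}$ forces $s = m\otimes\mathbbm{1}_S$ for some $m \in M_k$; symmetrically $t = n\otimes\mathbbm{1}_T$; plugging back gives $n = -m$, so $s + t$ represents $0$ in $M_k(S\oplus_1 T)$.

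For the universal property, given ucp maps $\alpha : S \to R$ and $\beta : T \to R$, the map $\gamma(s+t) := \alpha(s) + \beta(t)$ is well-defined (since $\alpha(\mathbbm{1}_S) = \mathbbm{1}_R = \beta(\mathbbm{1}_T)$), unital, and uniquely determined by $\alpha,\beta$. The nontrivial point is complete positivity. Given $s+t \in M_k(S\oplus_1 T)_+$ and an arbitrary state $\psi \in \mathscr{S}(M_k(R))$, set $\phi := \psi\circ\alpha$ and $\chi := \psi\circ\beta$, where $\alpha,\beta$ here denote the entrywise amplifications $M_k(S) \to M_k(R)$ and $M_k(T) \to M_k(R)$. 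Both $\phi$ and $\chi$ are states, and they restrict to the common functional $m \mapsto \psi(m\otimes\mathbbm{1}_R)$ on $M_k$, so $(\phi,\chi)$ is compatible. Therefore $\psi(\alpha(s)+\beta(t)) = \phi(s)+\chi(t) \geq 0$, and since $\psi$ was arbitrary, Remark~\ref{separate} concludes $\alpha(s)+\beta(t) \geq 0$ in $M_k(R)$.

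The hardest step is salience: unlike the other axioms, it requires unpacking exactly which elements of $M_k(S)\oplus M_k(T)$ represent zero in the quotient by $M_k\cdot(\mathbbm{1}_S - \mathbbm{1}_T)$, and this is what necessitates the slicing-and-separation argument above. The universal property itself, by contrast, rests on a single clean observation: pulling a state on $M_k(R)$ back along the amplified ucp maps automatically produces a compatible pair. This is precisely the reason the compatible-pair cone is the correct dualisation of the coproduct property, and once spotted, complete positivity of $\gamma$ becomes a one-line calculation.
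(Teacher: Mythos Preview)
Your argument is correct and follows essentially the same route as the paper's: verify the operator system axioms directly from the compatible-pair definition, then deduce complete positivity of the induced map $s+t\mapsto f(s)+g(t)$ by pulling back a state on $M_k(R)$ to a compatible pair. In fact you are more careful than the paper in two places: you actually unpack salience (the paper simply asserts that matricial states separate and concludes ``$s=0$ and $t=0$'', glossing over the quotient), and you flag the degenerate case $\phi'(\gamma^*\gamma)=0$ in the compression axiom, which the paper silently ignores. The only thing you leave implicit that the paper states is that the inclusions $S\to S\oplus_1 T$ and $T\to S\oplus_1 T$ are themselves ucp, but this is immediate from Remark~\ref{separate}.
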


\begin{proof}
It is clear that the thereby defined set of positive elements on $M_k(S\oplus_1 T)$ is a cone contained in the subspace of hermitian elements. Also, the cone is salient: any element $s+t$ which is contained in both the cone and its reflection at the origin necessarily satisfies $\phi(s)+\chi(t)=0$ for each pair of compatible matricial states. But since matricial states separate the elements of $M_k(S)$ and $M_k(T)$, it follows that $s=0$ and $t=0$. It can also be directly verified by the definition of positivity that $\mathbbm{1}$ is an Archimedean order unit of $M_k(S\oplus_1 T)$. The final but crucial operator system axiom concerns the compatibility of the different matricial cones with multiplication by scalar matrices. To this end, let $x\in M_{k,m}$ be a scalar matrix, $s+t\in M_k(S\oplus_1 T)$ be any positive element and $\phi\in\mathscr{S}(M_m(S))$ and $\chi\in\mathscr{S}(M_m(T))$ any pair of compatible states. Then it has to be shown that
\beq
\phi(x^*sx)+\chi(x^*tx)\geq 0\:
\eeq
To see this, note that $\phi(x^*x)=\chi(x^*x)$ by compatibility of the pair $(\phi,\chi)$, so that $\frac{\phi(x^*\cdot x)}{\phi(x^*x)}$ and $\frac{\chi(x^*\cdot x)}{\chi(x^*x)}$ are compatible states on $M_k(S)$ and $M_k(T)$, respectively. Hence the assertion follows from the assumption $s+t\geq 0$.

Now that it is clear that the operator system axioms are satisfied, it is time to check the coproduct property. By remark~\ref{separate}, the natural inclusions $S\ra S\oplus_1 T$ and $T\ra S\oplus_1 T$ are completely positive, while their unitality is also clear by definition. Now let $f:S\ra U$ and $g:T\ra U$ be any ucp maps to any other operator system $U$. By linearity, the only possible extension of $f$ and $g$ to $S\oplus_1 T$ is given by
\beq
S\oplus_1 T\lra U,\qquad s+t\mapsto f(s)+g(t)\:,
\eeq
so that it has to be verified that this is well-defined and ucp. By unitality of $f$ and $g$ and the definition~(\ref{udsos}), it is clearly well-defined and also unital. For complete positivity, consider any element $s+t\in M_k(S\oplus_1 T)$. Then again by remark~\ref{separate}, it is enough to take any state $\rho\in\mathscr{S}(M_k(U))$ and show that
\beq
\rho(f(s)+g(t))\geq 0.
\eeq
But this in turn is clear since $\rho\circ f$ is a state on $M_k(S)$ compatible with the state $\rho\circ g\in\mathscr{S}(M_k(T))$. This ends the proof.
\end{proof}

\begin{prop}
\label{scalarpos}
The positive matrix cones on the coproduct $S\oplus_1 T$ can also be characterized as follows: an element $s+t\in M_k(S\oplus_1 T)$ is positive if and only if
\beq
\exists\lambda\in M_k\quad\textrm{such that}\quad s-\lambda\geq 0\textrm{ in }S,\quad t+\lambda\geq 0\textrm{ in }T\:.
\eeq
\end{prop}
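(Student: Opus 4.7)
The \emph{if} direction is immediate from the characterization in Proposition~\ref{coprod}: given such a $\lambda$, any compatible pair $(\phi,\chi)\in\mathscr{S}(M_k(S))\times\mathscr{S}(M_k(T))$ satisfies $\phi(\lambda)=\chi(\lambda)$ by definition of compatibility, hence
$$\phi(s)+\chi(t)=\phi(s-\lambda)+\chi(t+\lambda)\geq 0.$$

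For the converse I would argue by contrapositive, using a Hahn--Banach separation. Introduce the convex cone
$$\tilde{C}=\bigl\{(s,t)\in M_k(S)_h\oplus M_k(T)_h : \exists\,\lambda=\lambda^*\in M_k \text{ with } s-\lambda\geq 0,\ t+\lambda\geq 0\bigr\},$$
which contains $M_k(S)_+\oplus\{0\}$, $\{0\}\oplus M_k(T)_+$, and every pair $(\lambda,-\lambda)$ for hermitian $\lambda\in M_k$. The crucial technical step is to show that $\tilde{C}$ is norm-closed. If $(s_n,t_n)\to(s,t)$ with witnesses $\lambda_n$, then $\lambda_n\leq s_n$ and $-\lambda_n\leq t_n$; combined with boundedness of $(s_n),(t_n)$ and the fact that the inclusions $M_k\hookrightarrow M_k(S)$ and $M_k\hookrightarrow M_k(T)$ are order embeddings, this forces $(\lambda_n)$ to be norm-bounded in the finite-dimensional space $M_k$. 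A convergent subsequence $\lambda_{n_j}\to\lambda$ and closedness of the positive cones in the operator systems then produce a valid witness for $(s,t)$.

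Now suppose $s+t\geq 0$ while $(s,t)\notin\tilde{C}$. Hahn--Banach yields a continuous real-linear $\Lambda:M_k(S)_h\oplus M_k(T)_h\to\R$ with $\Lambda(s,t)<0$ and $\Lambda|_{\tilde{C}}\geq 0$. Decomposing $\Lambda=\phi_0+\chi_0$ and testing against the three families of generators listed above yields $\phi_0\geq 0$, $\chi_0\geq 0$, and $\phi_0|_{M_k}=\chi_0|_{M_k}$. Set $c=\phi_0(\mathbbm{1}_k)=\chi_0(\mathbbm{1}_k)$: if $c=0$, the order-unit property of $\mathbbm{1}_k$ forces $\phi_0=\chi_0=0$, contradicting $\Lambda(s,t)<0$; if $c>0$, then $(\phi_0/c,\chi_0/c)$ is a compatible pair of states whose value on $s+t$ is negative, contradicting Proposition~\ref{coprod}. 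The main obstacle is the closedness of $\tilde{C}$: since it is the sum of a closed cone with a finite-dimensional subspace, closedness is not a formal consequence, and the argument essentially relies on the order-unit structure of $M_k$ to bound the witnesses $\lambda_n$.
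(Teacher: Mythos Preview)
Your argument is correct, but it is not the route taken in the paper. The paper does not prove the equivalence of the two descriptions of the positive cones directly. Instead, it verifies (and this is left to the reader) that the cones
\[
C_k=\{\,s+t : \exists\,\lambda=\lambda^*\in M_k,\ s-\lambda\ge 0,\ t+\lambda\ge 0\,\}
\]
themselves satisfy the operator system axioms of Definition~\ref{os}, that the inclusions of $S$ and $T$ are ucp, and that the universal property of a coproduct holds for this structure. Since coproducts are unique up to canonical isomorphism, this operator system must coincide with the one built in Proposition~\ref{coprod}, and the two families of cones agree. Your proof, by contrast, fixes the coproduct from Proposition~\ref{coprod} as the reference object and shows directly, via Hahn--Banach separation, that its positive cones are exactly the $C_k$.

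It is worth noting that the two approaches share the same technical core. In the paper's verification, the Archimedean axiom for the $C_k$ is the nontrivial point: given witnesses $\lambda_\varepsilon$ for $s+t+\varepsilon\mathbbm{1}_k$, one has to extract a limiting witness $\lambda$ for $s+t$, and this is done by exactly the boundedness-and-compactness argument you give for the closedness of $\tilde{C}$ (using that $M_k\hookrightarrow M_k(S)$ is an order embedding and that $M_k$ is finite-dimensional). So your ``main obstacle'' is precisely the step that the paper's one-line ``direct verification'' hides. What your approach buys is an explicit, self-contained argument that does not appeal to the uniqueness of coproducts; what the paper's approach buys is a cleaner conceptual explanation of \emph{why} the two descriptions agree, at the cost of suppressing the analytic details.
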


\begin{proof}
Again one can directly verify that this defines an abstract operator system structure together with ucp embeddings from $S$ and $T$. The universal property is clear. It coincides with the $S\oplus_1 T$ from the previous proposition by uniquness of the (isomorphism class of the) coproduct.
\end{proof}

Our next goal is to now take $S$ and $T$ to be $C^*$-algebras $A$ and $B$ (which carry a natural operator system structure), and then relate the coproduct $A\oplus_1 B$ to the operator systems~(\ref{opsyss}), in particular to $A\oplus_\ast B$.

As a preliminary note, recall that a representation of $M_k(\C)$ on some Hilbert space $\mathcal{H}$ is the same thing as a decomposition of $\mathcal{H}$ into $k$ orthogonal subspaces
\beq
\mathcal{H}_1=e_{11}\mathcal{H}\:,\quad\ldots\:,\quad\mathcal{H}_k=e_{kk}\mathcal{H}
\eeq
together with specified partial isometries which interpolate unitarily between any pair of these subspaces. Also it will frequently be used that an element of a $C^*$-algebra is positive if and only if its image under a representation of that $C^*$-algebra on a Hilbert spaces is always positive.

\begin{lem}
\label{equivmatrix}
Let $\xi_1,\ldots,\xi_k$ be a linearly independent set of vectors in some Hilbert space. Given another linearly independent set $\eta_1,\ldots,\eta_k$ such that
\beqn
\label{spc}
\langle\xi_i,\xi_j\rangle=\langle\eta_i,\eta_j\rangle\:,
\eeqn
then there is a unitary operator $U$ such that $\eta_i=U\xi_i$.
\end{lem}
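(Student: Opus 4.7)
The plan is to build the unitary $U$ in two stages: first define it on the finite-dimensional subspace $V := \mathrm{span}(\xi_1,\ldots,\xi_k)$ so that it maps this subspace isometrically onto $W := \mathrm{span}(\eta_1,\ldots,\eta_k)$, and then extend it arbitrarily to an isometry between $V^\perp$ and $W^\perp$.

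For the first stage, I would define $U_0 : V \to W$ by setting $U_0 \xi_i = \eta_i$ and extending by linearity. This is well-defined precisely because the $\xi_i$ are linearly independent, so they form a basis of $V$. Using~(\ref{spc}), a direct computation on a general element $\sum_i c_i \xi_i \in V$ gives
\beq
\Bigl\| \sum_i c_i \xi_i \Bigr\|^2 = \sum_{i,j} \overline{c_i} c_j \langle \xi_i, \xi_j\rangle = \sum_{i,j} \overline{c_i} c_j \langle \eta_i, \eta_j\rangle = \Bigl\| \sum_i c_i \eta_i \Bigr\|^2,
\eeq
so $U_0$ is an isometric linear bijection from $V$ to $W$. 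In particular, the $\eta_i$ being linearly independent is automatic from the Gram matrix equality, but this assumption also ensures that $W$ really has dimension $k$.

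For the second stage, since $V$ and $W$ are both $k$-dimensional subspaces of the same ambient Hilbert space $\mathcal{H}$, their orthogonal complements $V^\perp$ and $W^\perp$ have equal Hilbert-space dimension (both equal to $\dim\mathcal{H} - k$, finite or infinite). Pick any unitary $U_1 : V^\perp \to W^\perp$ and define $U := U_0 \oplus U_1$ with respect to the orthogonal decompositions $\mathcal{H} = V \oplus V^\perp = W \oplus W^\perp$. By construction $U$ is unitary and satisfies $U\xi_i = \eta_i$.

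The proof is essentially routine; the only potential subtlety is making sure the two orthogonal complements are isomorphic as Hilbert spaces, which is immediate because both spans have the same finite dimension $k$ inside the common ambient space. No deep input is required beyond the polarization-style Gram matrix computation.
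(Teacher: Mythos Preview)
Your proof is correct. The paper takes a slightly different packaging of the same idea: it first reduces without loss of generality to the case where the ambient Hilbert space has dimension $k$, and then applies Gram--Schmidt to the $\xi_i$ to obtain an orthonormal basis $\xi_i'=\sum_j V_{ij}\xi_j$; the equality of Gram matrices then forces the same transformation of the $\eta_i$ to yield an orthonormal basis as well, and one maps one orthonormal basis to the other. Your approach instead verifies directly via the Gram-matrix computation that $\xi_i\mapsto\eta_i$ extends to an isometry on the spans, and then handles the extension to the orthogonal complements explicitly. The two arguments are equivalent in content; yours is arguably more self-contained, since the paper's ``w.l.o.g.\ dimension $k$'' step tacitly relies on exactly the complement-extension argument you spell out.
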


\begin{proof}
It can be assumed w.l.o.g. that the Hilbert space has dimension $k$.

An application of Gram-Schmidt orthogonalization yields an invertible matrix $V_{ij}$ such that the vectors
\beq
\xi_i'\equiv\sum_j V_{ij}\xi_j
\eeq
form an orthonormal basis. But then also the vectors
\beq
\eta_i'\equiv\sum_j V_{ij}\eta_j
\eeq
form an orthonormal basis, and hence there is a unitary operator $U$ connecting the $\xi'$-basis to the $\eta'$-basis, and therefore also mapping the $\xi$-basis to the $\eta$-basis.
\end{proof}

\begin{prop}
\label{freepos}
For $a+b\in M_k(A\oplus_\ast B)$, the following are equivalent:
\begin{enumerate}
\item $a+b\geq 0$ in $M_k(A\ast_1 B)$,
\item If $\pi_A:A\ra\mathcal{B}(\mathcal{H})$ and $\pi_B:B\ra\mathcal{B}(\mathcal{H})$ are any two unital representations on the same Hilbert space, then $\pi_A(a)+\pi_B(b)$ acts as a positive operator on $\mathcal{H}^k=\mathcal{H}\otimes\C^k$.
\item For every compatible pair of states $\left(\alpha\in\mathscr{S}(M_k(A)), \beta\in\mathscr{S}(M_k(B))\right)$, it holds that $\alpha(a)+\beta(b)\geq 0$.
\item There is some $\lambda\in M_k$ such that $a-\lambda\geq 0$ and $b+\lambda\geq 0$.
\end{enumerate}
\end{prop}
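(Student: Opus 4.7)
My plan is to establish the cyclic chain (iv) $\Rightarrow$ (i) $\Rightarrow$ (ii) $\Rightarrow$ (iii) $\Rightarrow$ (iv), of which three links should be short and the fourth, (ii) $\Rightarrow$ (iii), will carry the main content. For (iv) $\Rightarrow$ (i): since $A, B \hookrightarrow A *_1 B$ are $*$-homomorphisms, the hypothesized inequalities $a - \lambda \geq 0$ in $M_k(A)$ and $b + \lambda \geq 0$ in $M_k(B)$ should remain positivities in $M_k(A *_1 B)$, and their sum is $a + b$. For (i) $\Rightarrow$ (ii): given unital representations $\pi_A, \pi_B$ on a common $\mathcal{H}$, I would invoke the universal property of $A *_1 B$ to produce a $*$-homomorphism $\pi : A *_1 B \to \mathcal{B}(\mathcal{H})$ extending both, and apply $\pi \otimes \id_{M_k}$ to (i). For (iii) $\Rightarrow$ (iv): I would simply specialize Propositions~\ref{coprod} and~\ref{scalarpos} to $S = A$, $T = B$, which gives exactly the equivalence between these two descriptions of the coproduct cone.

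The main step is (ii) $\Rightarrow$ (iii). Given a compatible pair $(\alpha, \beta)$, I would apply GNS to $\alpha$ viewed as a state on $A \otimes M_k$: because $\mathbbm{1} \otimes M_k$ is a unital subalgebra, the representation should split as $\pi_A \otimes \id$ on $\mathcal{H}_A \otimes \mathbb{C}^k$ with cyclic vector $\xi_A = \sum_{i=1}^k \xi_A^{(i)} \otimes e_i$; analogously for $\beta$ I would get $\pi_B$ on $\mathcal{H}_B$ with $\xi_B = \sum_{i=1}^k \xi_B^{(i)} \otimes e_i$. The compatibility condition $\alpha|_{M_k} = \beta|_{M_k}$ should translate precisely into the equality of Gram matrices $\langle \xi_A^{(i)}, \xi_A^{(j)} \rangle = \langle \xi_B^{(i)}, \xi_B^{(j)} \rangle$. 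After embedding both $\mathcal{H}_A$ and $\mathcal{H}_B$ into a common Hilbert space $\mathcal{H}$, Lemma~\ref{equivmatrix} should furnish a unitary $U$ on $\mathcal{H}$ with $U \xi_A^{(i)} = \xi_B^{(i)}$. Conjugating $\pi_A$ by $U$ places both representations on the same Hilbert space with a common vector $\xi := \xi_B$, so that (ii) finally yields $\alpha(a) + \beta(b) = \langle \xi, [\pi_A(a) + \pi_B(b)] \xi \rangle \geq 0$.

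The principal technical obstacle is that Lemma~\ref{equivmatrix} requires the $\xi_A^{(i)}$ and the $\xi_B^{(i)}$ to be linearly independent, which need not hold for the GNS vectors. I would remedy this by first restricting to a basis of the (finite-dimensional) span of each tuple and exploiting the fact that the shared Gram matrix encodes the same linear dependencies on both sides, so that a unitary matching the bases extends consistently to send $\xi_A^{(i)}$ to $\xi_B^{(i)}$. As a fallback, one can perturb both tuples by small mutually orthogonal contributions in an enlarged Hilbert space to enforce linear independence, apply the lemma, and then pass to the limit using the Archimedean axiom for the positive cone.
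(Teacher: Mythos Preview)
Your approach is essentially identical to the paper's: the same cyclic chain, the same appeal to the universal property for (i)$\Rightarrow$(ii), the same invocation of Propositions~\ref{coprod} and~\ref{scalarpos} for the equivalence of (iii) and (iv), and the same GNS-plus-Lemma~\ref{equivmatrix} argument for the main step (ii)$\Rightarrow$(iii). The paper's version simply tensors $\mathcal{H}_\alpha$ and $\mathcal{H}_\beta$ with a large enough auxiliary space to match dimensions and then invokes Lemma~\ref{equivmatrix} directly, without commenting on the linear-independence hypothesis; your proposal is in fact more careful on this point, and either of your suggested fixes (restricting to a basis and using that equal Gram matrices force identical linear relations, or perturbing and passing to the limit via the Archimedean property) is a valid way to close that small gap.
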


\begin{proof}
The implication from (i) to (ii) is clear by the universal property of $A\ast_1 B$. Both (iii) and (iv) characterize the coproduct of $A$ and $B$ in the category of operator systems by the propositions~\ref{coprod} and~\ref{scalarpos}, hence these two are equivalent. Also (iv) clearly implies (i) since a sum of positive elements is positive.

It remains to prove the implication from (ii) to (iii). To this end, consider the GNS representations of some given $\alpha$ and $\beta$; we will use these to construct a Hilbert space $\mathcal{H}$ on which both $M_k(A)$ and $M_k(B)$ act. By the above remark on representations of matrix algebras, the Hilbert spaces of the two GNS representations are of the form $\mathcal{H}_\alpha\otimes\C^k$ and $\mathcal{H}_\beta\otimes\C^k$, where $A$ respectively $B$ only acts on the first factor, while $M_k$ acts on the second factor in the obvious way. The given GNS vectors are of the form
\beqn
\label{GNSvecs}
\sum_{i=1}^k\xi_i\otimes e_i\:\in\:\H_\alpha\otimes\C^k,\qquad\sum_{i=1}^k\eta_i\otimes e_i\:\in\H_\beta\otimes\C^k\:,
\eeqn
respectively, where\eq{spc} holds by compatibility of the pair $(\alpha,\beta)$. After possibly tensoring both by a third Hilbert space of big enough dimension, we find that $\mathcal{H}_\alpha$ and $\mathcal{H}_\beta$ are of the same dimension and therefore can be identified. This identification can be chosen to map the two GNS vectors\eq{GNSvecs} to each other by lemma~\ref{equivmatrix}. Now we have accomplished the situation that both states are induced from representations on the same Hilbert space $\H\otimes\C^k$ by the same vector, and the assumption (ii) can now be applied to yield the assertion (iii).
\end{proof}

In particular, we have therefore arrived at:

\begin{cor}
Let $A$ and $B$ be unital $C^*$-algebras. Their coproduct in the category of operator systems with ucp maps is given by
\beq
A\oplus_\ast B\equiv A+B\subseteq A\ast_1 B\:.
\eeq
\end{cor}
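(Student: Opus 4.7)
The plan is to observe that this corollary is essentially a repackaging of Proposition~\ref{freepos}: it only remains to check that the universal property of the coproduct, already known to characterize the abstract object $A\oplus_1 B$ constructed in Proposition~\ref{coprod}, is inherited by $A\oplus_\ast B\subseteq A\ast_1 B$.

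First I would note that the operator system $A\oplus_1 B$ built in Proposition~\ref{coprod} and the operator system $A\oplus_\ast B$ defined in~(\ref{opsyss}) share the same underlying vector space~(\ref{uds}), the same unit (the image of $\mathbbm{1}_A=\mathbbm{1}_B$), and the same involution. Proposition~\ref{freepos} then directly identifies their positive matrix cones: its condition~(i) is by definition positivity in $M_k(A\oplus_\ast B)$, while condition~(iii) is by definition positivity in $M_k(A\oplus_1 B)$. Since an operator system structure on a fixed $*$-vector space with a fixed unit is determined by its matricial positive cones, this equality of cones for every $k$ gives $A\oplus_\ast B=A\oplus_1 B$ as operator systems.

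Second, I would verify that the natural maps $A\hookrightarrow A\oplus_\ast B$ and $B\hookrightarrow A\oplus_\ast B$ are ucp: unitality is clear from the identification $\mathbbm{1}_A=\mathbbm{1}_B$, and complete positivity is automatic because these inclusions are restrictions of the injective unital $*$-homomorphisms $A\hookrightarrow A\ast_1 B$ and $B\hookrightarrow A\ast_1 B$ given by the universal property of the unital free product. Combined with the universal property of $A\oplus_1 B$ established in Proposition~\ref{coprod}, this shows that $A\oplus_\ast B$ satisfies the coproduct property.

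There is really no obstacle here: all the real work has already been done in Proposition~\ref{freepos}, whose delicate step was the implication (ii)$\Rightarrow$(iii) via the GNS construction and Lemma~\ref{equivmatrix}. The corollary is simply the conceptual summary of that equivalence.
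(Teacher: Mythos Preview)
Your argument is correct and matches the paper's intended reasoning: the corollary is immediate from Proposition~\ref{freepos}, since the equivalence (i)$\Leftrightarrow$(iii) there identifies the matricial positive cones of $A\oplus_\ast B$ with those of the abstract coproduct $A\oplus_1 B$ from Proposition~\ref{coprod}. Your second paragraph is slightly redundant---once you have $A\oplus_\ast B=A\oplus_1 B$ as operator systems, the ucp inclusions and the universal property come for free from Proposition~\ref{coprod}---but this does no harm.
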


Furthermore, we can now also show that ucp maps on $C^*$-algebras can be extended to free products:

\begin{cor}
\label{extend}
Let $A$ and $B$ be unital $C^*$-algebras and 
\beq
\Phi:A\ra\B(\H),\qquad \Psi:B\ra\B(\H)
\eeq
ucp maps for some Hilbert space $\H$. Then there is a ucp map $\Omega:A\ast_1 B\ra\B(\H)$ extending these, i.e. such that the diagram
\beq
\xymatrix{ A \ar@{_{(}->}[d] \ar[rrd]^\Phi \\
 A\ast_1 B \ar@{-->}[rr]^(.4)\Omega && \B(\H) \\
B \ar@{^{(}->}[u] \ar[urr]_\Psi }
\eeq
commutes.
\end{cor}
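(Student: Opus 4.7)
The plan is to combine the coproduct property of $A\oplus_\ast B$ established in the preceding corollary with Arveson's extension theorem. Nothing more is needed, because the work has already been done in proving proposition~\ref{freepos}.

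First, I regard $\B(\H)$ as an operator system (in fact a $C^*$-algebra). Since $\Phi:A\ra\B(\H)$ and $\Psi:B\ra\B(\H)$ are ucp by hypothesis, the preceding corollary (which identifies $A\oplus_\ast B$ as the coproduct of $A$ and $B$ in the category of operator systems with ucp maps) applies: it produces a unique ucp map
\beq
\Omega_0:A\oplus_\ast B\lra\B(\H)
\eeq
whose restriction to $A$ equals $\Phi$ and whose restriction to $B$ equals $\Psi$.

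Second, $A\oplus_\ast B$ sits as an operator subsystem of the $C^*$-algebra $A\ast_1 B$ by its very definition. Arveson's extension theorem (applicable because $\B(\H)$ is injective in the category of operator systems) then provides a ucp extension
\beq
\Omega:A\ast_1 B\lra\B(\H)
\eeq
of $\Omega_0$. Since $\Omega$ restricts to $\Omega_0$ on $A\oplus_\ast B$, and $\Omega_0$ restricts to $\Phi$ and $\Psi$ on $A$ and $B$ respectively, $\Omega$ is the desired ucp map making the diagram commute.

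There is no real obstacle here: the only content is recognizing that the coproduct universal property produces the map on the operator subsystem $A\oplus_\ast B\subset A\ast_1 B$, after which injectivity of $\B(\H)$ does the rest. The actual substance of the statement is thus contained in proposition~\ref{freepos}, which characterizes positivity in $M_k(A\ast_1 B)$ for elements of the form $a+b$ in terms of pairs of compatible representations.
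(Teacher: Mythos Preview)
Your proof is correct and follows exactly the same approach as the paper: use the coproduct property of $A\oplus_\ast B$ to obtain the ucp map on the operator subsystem, then apply Arveson's extension theorem to the inclusion $A\oplus_\ast B\subseteq A\ast_1 B$.
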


\begin{proof}
This follows immediately from the present results by applying the Arveson extension theorem to the inclusion $A\oplus_\ast B\subseteq A\ast_1 B$.
\end{proof}

Corollary~\ref{extend} can be regarded as a weakened version of Boca's free products of ucp maps~\cite{Boca}.

\section{Basic entanglement theory for $C^*$-algebras}
\label{sec4}

This section is a digression with little relation to the rest of this work. The goal is to introduce the concepts and results necessary to prove proposition~\ref{ent}, which will then later be relevant for the proof of proposition~\ref{oncomp}. The definitions in this section are generalizations of the standard ones from quantum information theory~\cite{Hor}.

Since a $C^*$-algebra like $M_k(A\otimes_{\max}B)$ is actually a tensor product of \emph{three} $C^*$-algebras, the next few definitions and facts will be formulated generally for an arbitrary finite number of tensor product factors. So let $A_1,\ldots,A_n$ be unital $C^*$-algebras and
\beqn
\label{nprod}
A_1\dot{\otimes}\ldots\dot{\otimes}A_n
\eeqn
stand for any fixed $C^*$-algebraic tensor product. We now extend some basic concepts of quantum entanglement theory~\cite{Hor} to this setting.

\begin{defn}[product state]
Given states $\phi_i\in\mathscr{S}(A_i)$, we obtain a state $\phi_1\otimes\ldots\otimes\phi_n$ in $\mathscr{S}(A_1\dot{\otimes}\ldots\dot{\otimes}A_n)$ by setting
\beqn
\label{productstate}
\left(\phi_1\otimes\ldots\otimes\phi_n\right)\left(a_1\otimes\ldots\otimes a_n\right)\equiv\prod_i\phi_i(a_i),\qquad\forall a_i\in A_i
\eeqn
and extending by linearity and continuity. A state on $\mathscr{S}(A_1\dot{\otimes}\ldots\dot{\otimes}A_n)$ of this form is called a \emph{product state}.
\end{defn}

\begin{defn}[reduced state]
Given a state $\phi\in\mathscr{S}\left(A_1\dot{\otimes}\ldots\dot{\otimes}A_n\right)$, it restricts to a state in $\mathscr{S}(A_i)$ denoted by $\phi_{|A_i}$ as
\beq
\phi_{|A_i}(a)\equiv\phi\left(\mathbbm{1}\otimes\ldots\otimes a\otimes\ldots\otimes\mathbbm{1}\right)\:,
\eeq
where the $a$ stands at the $i$-th position in the product. We call $\phi_{|A_i}$ the \emph{reduced state of $\phi$ at $A_i$}.
\end{defn}

Similarly, one obtains a reduced state in $\mathscr{S}\left(\dot{\bigotimes}_{i\in\Omega}A_i\right)$ for each subset $\Omega\subseteq\{1,\ldots,n\}$. Obviously, taking reduced states of product states recovers the given states:
\beq
\left(\phi_1\otimes\ldots\otimes\phi_n\right)_{|A_i}=\phi_i\:.
\eeq

For the following, let it be reminded that we only consider the weak $*$-topology on state spaces, so this is what continuity refers to.

\begin{lem}
\label{cont}
\begin{enumerate}
\item Taking the reduced state is a continuous map
\beq
\mathscr{S}\left(A_1\dot{\otimes}\ldots\dot{\otimes}A_n\right)\lra\mathscr{S}(A_i)\:.
\eeq
\item The formation of product states as a map
\beq
\mathscr{S}(A_1)\times\ldots\times\mathscr{S}(A_n)\lra\mathscr{S}\left(A_1\dot{\otimes}\ldots\dot{\otimes}A_n\right)
\eeq
is continuous.
\end{enumerate}
\end{lem}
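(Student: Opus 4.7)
Both parts are essentially unpackings of the definition of the weak $*$-topology, which is the initial topology generated by the evaluation maps $\phi\mapsto\phi(x)$ as $x$ ranges over the ambient $C^*$-algebra.

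For (i), by the universal property of an initial topology, it suffices to check that for each fixed $a\in A_i$ the composite $\phi\mapsto\phi_{|A_i}(a)$ is continuous on $\mathscr{S}(A_1\dot{\otimes}\ldots\dot{\otimes}A_n)$. But by definition of the reduced state, this composite is nothing other than the evaluation of $\phi$ at the single fixed element $\mathbbm{1}\otimes\ldots\otimes a\otimes\ldots\otimes\mathbbm{1}$ of the tensor product $A_1\dot{\otimes}\ldots\dot{\otimes}A_n$, so continuity is immediate from the definition of the weak $*$-topology.

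For (ii), I would proceed in two stages. First, on the algebraic tensor product $A_1\otimes_{\mathrm{alg}}\ldots\otimes_{\mathrm{alg}}A_n$, the map $(\phi_1,\ldots,\phi_n)\mapsto(\phi_1\otimes\ldots\otimes\phi_n)(x)$ is continuous: for elementary tensors $x=a_1\otimes\ldots\otimes a_n$ it is the finite product of the continuous scalar functions $\phi_i\mapsto\phi_i(a_i)$, and the general case on the algebraic tensor product then follows by linearity in $x$. Second, for arbitrary $x\in A_1\dot{\otimes}\ldots\dot{\otimes}A_n$ and any $\eps>0$, I would choose $y$ in the algebraic tensor product with $\|x-y\|<\eps$; since every state has norm $1$, each product state evaluated at $x$ differs from its value at $y$ by at most $\eps$, uniformly in the choice of the $\phi_i$. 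Combining this bound with the first stage applied to $y$ gives a standard three-$\eps$ argument showing that along any net $(\phi_1^{(\alpha)},\ldots,\phi_n^{(\alpha)})\to(\phi_1,\ldots,\phi_n)$ in the product topology one has $(\phi_1^{(\alpha)}\otimes\ldots\otimes\phi_n^{(\alpha)})(x)\to(\phi_1\otimes\ldots\otimes\phi_n)(x)$.

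The main ``obstacle'' is hardly one: the only mildly delicate point is the uniform-approximation step for (ii), and it works simply because states are uniformly norm-bounded, so the passage from the algebraic tensor product to its $C^*$-completion is painless and is insensitive to which $C^*$-norm the tensor product $\dot{\otimes}$ is equipped with.
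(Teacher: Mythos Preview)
Your proof is correct and follows essentially the same approach as the paper. For (ii), the paper packages things slightly differently by asserting up front that the weak $*$-topology on $\mathscr{S}(A_1\dot{\otimes}\ldots\dot{\otimes}A_n)$ is already the initial topology generated by evaluations at elementary tensors $a_1\otimes\ldots\otimes a_n$, and then checks only those; your density-plus-three-$\eps$ argument is exactly what justifies that assertion, so the two arguments are the same in substance, yours being the more explicit.
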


\begin{proof}
\begin{enumerate}
\item Clear by definition of the weak $*$-topology.
\item Since the weak $*$-topology in $\mathscr{S}(A_1\dot{\otimes}\ldots\dot{\otimes}A_n)$ is the initial topology generated by the evaluation maps
\beq
\mathscr{S}\left(A_1\dot{\otimes}\ldots\dot{\otimes}A_n\right)\lra\R,\qquad \phi\mapsto\phi(a_1\otimes\ldots\otimes a_n),\qquad a_i\in A_i\:,
\eeq
it is sufficient to check that all the compositions
\beq
\mathscr{S}(A_1)\times\ldots\times\mathscr{S}(A_n)\lra\mathscr{S}\left(A_1\dot{\otimes}\ldots\dot{\otimes}A_n\right)\lra\R,\qquad(\phi_i)_i\mapsto\phi_1\otimes\ldots\otimes\phi_n\mapsto\prod_i\phi_i(a_i)
\eeq
for fixed $a_i\in A_i$ are continuous. But since this composition factors into two other continuous maps as
\beq
\mathscr{S}(A_1)\times\ldots\times\mathscr{S}(A_n)\lra\R^n\stackrel{\cdot}{\lra}\R,\qquad (\phi_i)_i\mapsto\left(\phi_i(a_i)\right)_i\mapsto\prod_i\phi_i(a_i)
\eeq
it is itself also continuous.
\end{enumerate}
\end{proof}

\begin{defn}[separable and entangled states]
 A state $\phi\in\mathscr{S}\left(A_1\dot{\otimes}\ldots\dot{\otimes}A_n\right)$ is called \emph{separable} if it lies in the closed convex hull of product states, i.e.~if it can be approximated by states of the form
\beqn
\sum_{j=1}^m\lambda_j\left(\phi_1^j\otimes\ldots\otimes\phi_n^j\right)
\eeqn
with coefficients $\lambda_1,\ldots,\lambda_m\geq 0$ and $\sum_j\lambda_j=1$, and $\phi_i^j\in\mathscr{S}(A_i)$. If a state is not separable, it is called \emph{entangled}.
\end{defn}

For the case of matrix algebras, this reduces to the pioneering definitions of Werner~\cite{W}. However, for $A_i=\mathcal{B}(\mathcal{H})$ with an infinite-dimensional Hilbert space $\mathcal{H}$, it differs from the standard one~\cite{ESP}, where the closure is taken with respect to the topology induced by the trace norm on density matrices.

It follows from the definition that the set of separable states does not depend on the choice of tensor product, in the sense that all separable states factor over the minimal tensor product. In other words, a state which does not factor over the minimal tensor product is automatically entangled.

\begin{rem}
It may well be the case that one or several of the $A_i$ above are themselves defined as tensor products of $C^*$-algebras. In order to keep the notation from becoming ambiguous in such a case, we will indicate the $A_i$ by square brackets: e.g.~in
\beq
\left[A\dot{\otimes}B\right]\dot{\otimes}C\:,
\eeq
the tensor product $A\dot{\otimes}B$ is to be considered as a single $C^*$-algebra, so that the above concepts are to be applied with respect to the decomposition of $[A\dot{\otimes}B]\dot{\otimes}C$ into $A\dot{\otimes}B$ and $C$.
\end{rem}

\begin{lem}
\label{tritobi}
Suppose that $\phi\in\mathscr{S}(A\dot{\otimes}B\dot{\otimes}C)$ is such that $\phi_{|A\dot{\otimes}B}$ is entangled. Then $\phi$ is entangled on $A\dot{\otimes}\left[B\dot{\otimes}C\right]$.
\end{lem}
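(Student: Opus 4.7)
The natural approach is to prove the contrapositive: if $\phi$ is separable on $A\dot\otimes[B\dot\otimes C]$, then its reduction $\phi_{|A\dot\otimes B}$ is separable as a state on $A\dot\otimes B$. So the plan is to show that restriction of states carries separable states to separable states, for the appropriate change of factorization.

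First I would observe the elementary fact that if $\alpha\in\mathscr{S}(A)$ and $\psi\in\mathscr{S}(B\dot\otimes C)$, then the reduced state of the product state $\alpha\otimes\psi$ at $A\dot\otimes B$ is again a product state, namely $\alpha\otimes\psi_{|B}$. Indeed, evaluating on an elementary tensor gives $(\alpha\otimes\psi)(a\otimes b\otimes\mathbbm{1})=\alpha(a)\,\psi(b\otimes\mathbbm{1})=\alpha(a)\,\psi_{|B}(b)$. By linearity, the reduction map $R:\mathscr{S}(A\dot\otimes[B\dot\otimes C])\to\mathscr{S}(A\dot\otimes B)$ therefore sends convex combinations of product states to convex combinations of product states.

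Next I would invoke lemma~\ref{cont}(i) to conclude that $R$ is continuous with respect to the weak $*$-topology. For any continuous map $f:X\to Y$ one has $f(\overline S)\subseteq\overline{f(S)}$, so applying this to the set $S$ of convex combinations of product states in $\mathscr{S}(A\dot\otimes[B\dot\otimes C])$ yields
\beq
R\bigl(\overline{\mathrm{conv}(\text{product states})}\bigr)\subseteq\overline{R(\mathrm{conv}(\text{product states}))}\subseteq\overline{\mathrm{conv}(\text{product states in }\mathscr{S}(A\dot\otimes B))}\:.
\eeq
The left-hand side is the set of separable states on $A\dot\otimes[B\dot\otimes C]$, and the right-hand side is the set of separable states on $A\dot\otimes B$. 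Thus if $\phi$ is separable in the former sense, then $\phi_{|A\dot\otimes B}=R(\phi)$ is separable in the latter sense, which is exactly the contrapositive of the statement.

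I do not anticipate any serious obstacle: the only thing to be careful about is the two-step ``product state maps to product state'' fact together with the topological argument that continuity plus closure gives the desired containment, but both are routine given lemma~\ref{cont}.
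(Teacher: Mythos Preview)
Your argument is correct and is precisely the approach the paper takes: the paper's proof is the single sentence ``It follows directly from the definitions and lemma~\ref{cont} that the separability of $\phi$ on $A\dot{\otimes}[B\dot{\otimes}C]$ implies the separability of $\phi_{|A\dot{\otimes}B}$,'' and you have simply unpacked that sentence in full detail.
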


\begin{proof}
It follows directly from the definitions and lemma~\ref{cont} that the separability of $\phi$ on $A\dot{\otimes}\left[B\dot{\otimes}C\right]$ implies the separability of $\phi_{|A\dot{\otimes}B}$.
\end{proof}

The first part of the following lemma is a simple criterion for entanglement which allows us to produce many entangled states. Any state which is not pure is also called \emph{mixed}.

\begin{lem}
\label{puresep}
Let $\phi\in\mathscr{S}(A\dot{\otimes}B)$ be any state.
\begin{enumerate}
\item If $\phi$ is pure and $\phi_{|A}$ is mixed, then $\phi$ is entangled.
\item If the reduced state $\phi_{|A}$ is pure, then $\phi=\phi_{|A}\otimes\phi_{|B}$. In particular, $\phi$ is separable.
\end{enumerate}
\end{lem}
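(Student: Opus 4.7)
For part (i), my plan is a contrapositive argument based on the converse of Krein--Milman (Milman's theorem). Assuming $\phi$ is pure and separable, I will deduce that $\phi_{|A}$ is also pure. The set of product states is the continuous image of the weak-$*$ compact space $\mathscr{S}(A) \times \mathscr{S}(B)$ under the map from lemma~\ref{cont}(ii), so it is itself weak-$*$ compact, hence closed in $\mathscr{S}(A\dot{\otimes}B)$. The separable states are by definition the closed convex hull of this closed set, so Milman's theorem guarantees that every extreme point of the separable states is a product state. Since $\phi$ is an extreme point of $\mathscr{S}(A\dot{\otimes}B)$, it is a fortiori extreme in the convex subset of separable states, and so $\phi = \phi_A \otimes \phi_B$ for some $\phi_A \in \mathscr{S}(A)$ and $\phi_B \in \mathscr{S}(B)$. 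Finally, any nontrivial convex decomposition $\phi_A = t\alpha + (1-t)\beta$ would lift to the nontrivial decomposition $\phi = t(\alpha \otimes \phi_B) + (1-t)(\beta \otimes \phi_B)$, contradicting purity of $\phi$; hence $\phi_A = \phi_{|A}$ is itself pure, as claimed.

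For part (ii) I will invoke the standard order-theoretic characterization of purity: a state $\omega$ on a unital $C^*$-algebra is pure if and only if every positive linear functional $\sigma$ with $\sigma \leq \omega$ is of the form $\sigma = \lambda\omega$ for some $\lambda \in [0,1]$. The first step is to fix $b \in B$ with $0 \leq b \leq \mathbbm{1}$ and to consider the functional $\psi_b : A \to \C$ given by $\psi_b(a) = \phi(a \otimes b)$. Positivity of $a \otimes b$ whenever $a \geq 0$ makes $\psi_b$ positive, and the inequality $a \otimes b \leq a \otimes \mathbbm{1}$ (valid for $a \geq 0$ in any $C^*$-tensor product, since $a \otimes (\mathbbm{1} - b) \geq 0$) shows $\psi_b \leq \phi_{|A}$. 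Purity of $\phi_{|A}$ then yields $\psi_b = \lambda(b)\,\phi_{|A}$ for some scalar $\lambda(b) \geq 0$, and evaluation at $\mathbbm{1}_A$ identifies $\lambda(b) = \phi_{|B}(b)$. This proves $\phi(a \otimes b) = \phi_{|A}(a)\,\phi_{|B}(b)$ for all $a \in A$ and all positive contractions $b \in B$. Decomposing an arbitrary element of $B$ into a linear combination of (at most four) positive contractions extends the identity to all of $A \otimes_{\mathrm{alg}} B$ by bilinearity, and continuity of $\phi$ and of $\phi_{|A}\otimes\phi_{|B}$ extends it further to all of $A\dot{\otimes}B$, giving $\phi = \phi_{|A}\otimes\phi_{|B}$.

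The principal subtlety for part (i) is the closedness of the set of product states, which Milman's theorem requires and which is furnished precisely by lemma~\ref{cont}(ii); without it, one could only conclude that extreme points of the separable states are limits of product states rather than genuine product states. Part (ii) is essentially routine once the order-theoretic characterization of pure states is invoked, and I anticipate no serious obstacle beyond recording the extension-by-linearity-and-continuity argument at the end.
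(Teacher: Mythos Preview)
Your proof is correct and follows essentially the same approach as the paper: part~(i) is identical (Milman plus closedness of product states via lemma~\ref{cont}(ii)), and part~(ii) is the same purity argument, only phrased via the dominated-functional characterization rather than the explicit convex decomposition the paper writes out. Your formulation of~(ii) is in fact slightly cleaner, since it avoids the paper's separate treatment of the boundary cases $\phi_{|B}(b)\in\{0,1\}$.
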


\begin{proof}
\begin{enumerate}
\item It will be shown that when $\phi$ is separable and pure, then $\phi_{|A}$ is pure. To this end, note that by definition, the set of separable states is the closed convex hull of the pure product states and is compact in the weak $*$-topology. Hence by Milman's converse to the Krein-Milman theorem, any pure separable state lies in the closure of the set of pure product states. But by lemma~\ref{cont} and compactness, the set of product states is itself closed; hence $\phi=\phi_{|A}\otimes\phi_{|B}$. But then again by purity of $\phi$, the reduced state $\phi_{|A}$ itself is pure.
\item This proof is an adapted version of an argument made by Wilce~\cite[3.3]{Wil}.

Let $b\in B$ be some fixed element with $0\leq b\leq\mathbbm{1}$ such that $0<\phi_{|B}(b)<1$. The decomposition
\begin{align*}
\phi_{|A}(a)&=\phi(a\otimes\mathbbm{1})=\phi(a\otimes b)+\phi\left(a\otimes(\mathbbm{1}-b)\right)\\[.3cm]
&=\phi_{|B}(b)\frac{\phi(a\otimes b)}{\phi_{|B}(b)}+\phi_{|B}(\mathbbm{1}-b)\frac{\phi\left(a\otimes(\mathbbm{1}-b)\right)}{\phi_{|B}\left(\mathbbm{1}-b\right)}
\end{align*}
writes the state $\phi_{|A}$ as a convex combination of the two states
\beq
a\mapsto \frac{\phi(a\otimes b)}{\phi_{|B}(b)}\quad\textrm{ and }\quad a\mapsto\frac{\phi\left(a\otimes(\mathbbm{1}-b)\right)}{\phi_{|B}\left(\mathbbm{1}-b\right)}
\eeq
However since $\phi_{|A}$ is assumed to be pure, we know that both of these states coincide with $\phi_{|A}$ itself. From the equality of the first of these two states with $\phi_{|A}$, we therefore obtain in particular
\beqn
\label{prod}
\phi(a\otimes b)=\phi_{|A}(a)\phi_{|B}(b)\:.
\eeqn
While this equation has been derived under the assumption that $0<\phi_{|B}(b)<1$, it holds automatically when $\phi_{|B}(b)=0$: for then, considering w.l.o.g. $0\leq a\leq\mathbbm{1}$ means that $0\leq a\otimes b\leq\mathbbm{1}\otimes b$, and therefore $0\leq\phi(a\otimes b)=\phi_{|B}(b)=0$. The case $\phi_{|B}(b)=1$ on the other hand can be reduced to the case $\phi_{|B}(b)=0$ by considering $\mathbbm{1}-b$ instead of $b$. Hence, the validity of~(\ref{prod}) has been established for any $a$ and any $b$ with $0\leq b\leq 1$. But since the linear hull of elementary tensors $a\otimes b$ for such $a$ and $b$ is dense in $A\dot{\otimes}B$, we conclude that $\phi=\phi_{|A}\phi_{|B}$.
\end{enumerate}
\end{proof}

\begin{prop}
\label{ent}
\begin{enumerate}
\item If $A$ or $B$ is commutative, then all states on $A\dot{\otimes} B$ are separable\footnote{Here it is for which tensor product $\dot{\otimes}$ exactly stands for, since the $C^*$-tensor product is unique when $A$ or $B$ is commutative~\cite[IV.4.18]{Tak},~\cite{KR2}.}.
\item If neither $A$ nor $B$ is commutative, then there are entangled states on any $C^*$-tensor product $A\dot{\otimes}B$.
\item Let $A$ be noncommutative. Then there is an entangled state on $\alpha\in\mathscr{S}(M_2\otimes A)$ such that $\alpha_{|M_2}=\tr_2$, the normalized trace.
\end{enumerate}
\end{prop}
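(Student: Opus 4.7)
My plan is to dispatch the three parts in order, with both (ii) and (iii) resting on Lemma~\ref{puresep}(i). For (i), commutativity of $A$ forces the $C^*$-tensor norm to be unique, so I work with $A\otimes_{\min}B$. The goal is to show that every pure state is a product state; Krein--Milman then finishes. Given an irreducible representation $\pi$ of $A\otimes B$, the commutative subalgebra $\pi(A)$ commutes with $\pi(B)$ and hence lies in the commutant $\pi(A\otimes B)'=\mathbb{C}\cdot\mathbbm{1}$ by irreducibility. Thus $\pi$ decomposes as $\chi\otimes\pi_B$ for a character $\chi$ of $A$ and an irreducible representation $\pi_B$ of $B$, so every pure state of $A\otimes B$ has the product form $\chi\otimes\psi$. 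The state space equals the weak-$*$ closed convex hull of its pure states by Krein--Milman, so it equals the closed convex hull of product states, which is precisely the definition of separability.

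For (iii), my plan is to write down a Bell-type vector state. Noncommutativity of $A$ provides an irreducible representation $\pi\colon A\to\mathcal{B}(\mathcal{H})$ with $\dim\mathcal{H}\geq 2$ and a pair of orthonormal vectors $\xi_1,\xi_2\in\mathcal{H}$. Set
\[
\Omega=\frac{1}{\sqrt{2}}(e_1\otimes\xi_1+e_2\otimes\xi_2)\in\mathbb{C}^2\otimes\mathcal{H}
\]
and define $\alpha(m\otimes a)=\langle\Omega,(m\otimes\pi(a))\Omega\rangle$ on $M_2\otimes A$. A short calculation gives $\alpha_{|M_2}(m)=\tfrac{1}{2}(m_{11}+m_{22})=\tr_2(m)$. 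The representation $\mathrm{id}_{M_2}\otimes\pi$ has commutant $M_2'\otimes\pi(A)'=\mathbb{C}$ by the commutation theorem for tensor products, hence is irreducible, so $\alpha$ is pure. On the other hand $\alpha_{|A}=\tfrac{1}{2}(\phi_{\xi_1}+\phi_{\xi_2})$, and Kadison transitivity supplies $a\in A$ with $\pi(a)\xi_1=\xi_1$ and $\pi(a)\xi_2=0$, showing $\phi_{\xi_1}\neq\phi_{\xi_2}$; thus $\alpha_{|A}$ is a nontrivial convex combination of distinct pure states and in particular mixed. Lemma~\ref{puresep}(i) then certifies $\alpha$ as entangled.

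For (ii), I would repeat the construction with $B$ in place of $M_2$: noncommutative $B$ also admits an irreducible representation $\pi_B\colon B\to\mathcal{B}(\mathcal{H}_B)$ of dimension $\geq 2$, with orthonormal vectors $\eta_1,\eta_2\in\mathcal{H}_B$. The vector $\Omega=\tfrac{1}{\sqrt{2}}(\xi_1\otimes\eta_1+\xi_2\otimes\eta_2)\in\mathcal{H}_A\otimes\mathcal{H}_B$ defines a vector state on $A\otimes_{\min}B$ via $\pi_A\otimes\pi_B$; this representation is irreducible, since the tensor-product commutation theorem identifies the commutant as $\pi_A(A)'\otimes\pi_B(B)'=\mathbb{C}$. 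The Kadison transitivity argument again makes the reduction to $A$ mixed, so Lemma~\ref{puresep}(i) supplies an entangled state on $A\otimes_{\min}B$. For a general $C^*$-tensor product $A\dot{\otimes}B$, I pull this state back through the canonical surjection $A\dot{\otimes}B\to A\otimes_{\min}B$; any separable decomposition of the pullback would descend via agreement on elementary tensors to a separable decomposition on the minimal tensor product, contradicting entanglement there.

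The step I expect to demand the most care is ensuring $\phi_{\xi_1}\neq\phi_{\xi_2}$, since both (ii) and (iii) pivot on $\alpha_{|A}$ being mixed rather than pure; the correct tool is Kadison's transitivity theorem applied to the linearly independent pair $\xi_1,\xi_2$, and linear independence is secured here by the orthogonality we built in. Everything else is either classical (the tensor commutation theorem for von Neumann algebras, the Krein--Milman theorem) or a routine calculation.
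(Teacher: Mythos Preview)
Your proof is correct and follows the same strategy as the paper: for (i) reduce to showing pure states are product states and invoke Krein--Milman, and for (ii) and (iii) build a Bell-type vector state in the tensor product of irreducible representations of dimension $\geq 2$ and apply Lemma~\ref{puresep}(i). You supply more detail than the paper---notably the Kadison transitivity step certifying that $\phi_{\xi_1}\neq\phi_{\xi_2}$ so that the reduced state is genuinely mixed, and the explicit argument for (i), both of which the paper leaves to a citation or to the reader.
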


\begin{proof}
\begin{enumerate}
\item It is enough to show that every pure state on $A\dot{\otimes}B=A\otimes_{\min}B$ is a product state. But this is well-known~\cite[IV.4.14]{Tak}.

\item It is enough to show this for the minimal tensor product $A\otimes_{\min}B$.

Every noncommutative $C^*$-algebra has an irreducible representation of dimension at least $2$. Let $\pi_A:A\ra\B(\H_A)$ and $\pi_B:B\ra\B(\H_B)$ be such representations of $A$ and $B$, respectively. Then the tensor product representation
\beq
\pi_A\otimes\pi_B:A\otimes_{\min}B\lra\B(\H_A\otimes\H_B)
\eeq
is also irreducible~\cite[IV.4.13]{Tak}. Choose a pair of orthogonal unit vectors $\xi_1,\xi_2\in\H_A$, and a pair of orthogonal unit vectors $\zeta_1,\zeta_2\in\H_B$. Now consider the pure state $\phi\in\mathscr{S}(A\otimes_{\min}B)$ associated to the unit vector
\beq
\frac{\xi_1\otimes\zeta_1+\xi_2\otimes\zeta_2}{\sqrt{2}}\:.
\eeq
A direct calculation shows that the reduced state $\phi_{|A}$ is given by
\beq
\phi_{|A}(a)=\frac{1}{2}\langle\xi_1,\pi_A(a)\xi_1\rangle+\frac{1}{2}\langle\xi_2,\pi_A(a)\xi_2\rangle\:.
\eeq
Now the assertion follows from lemma~\ref{puresep}(i).
\item In the proof of part (ii), take $B=M_2$, let $\pi_B$ be the standard representation on $\C^2$, and set $\zeta_i=e_i$ to be the standard basis.
\end{enumerate}
\end{proof}

\section{Main results}
\label{sec5}

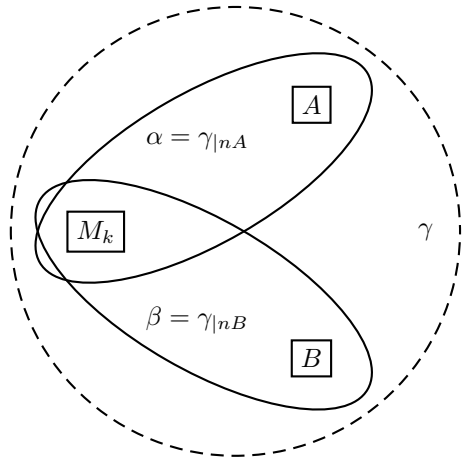
\begin{figure}
\begin{center}
\begin{pspicture}(0,-1)(2,2)
\psset{xunit=4pc}
\psset{yunit=4pc}
\pscircle[linestyle=dashed](1.1,1){3}
\rput{60}(.85,.5){\psellipse(0,0)(.6,1.5)}
\rput(0,1){\psframebox{$M_k$}}
\rput(1.7,2){\psframebox{$A$}}
\rput(1.7,0){\psframebox{$B$}}
\rput{-60}(.85,1.5){\psellipse(0,0)(.6,1.5)}
\rput(2.6,1){$\gamma$}
\rput(.8,1.7){$\alpha=\gamma_{|nA}$}
\rput(.8,0.3){$\beta=\gamma_{|nB}$}
\end{pspicture}
\end{center}
\caption{Illustration of a pair $(\alpha,\beta)$ of $\dot{\otimes}$-compatible states. We think of $A$ and $B$ as observable algebras of physical quantum systems. $M_k$ is the observable algebra of a quantum-mechanical $k$-state system. $\mathscr{S}(M_k(A\dot{\otimes}B))$ is the set of states of the total system.}
\end{figure}

\begin{defn}
We say that a pair of states $\alpha\in \mathscr{S}(M_k(A))$ and $\beta\in \mathscr{S}(M_k(B))$ is \emph{$\dot{\otimes}$-compatible} whenever there is a state $\gamma\in\mathscr{S}(M_k(A\dot{\otimes}B))$ such that
\beq
\gamma_{|M_k(A)}=\alpha,\qquad\gamma_{|M_k(B)}=\beta\:.
\eeq
\end{defn}

Clearly, for a pair of states to be $\dot{\otimes}$-compatible, it needs to be compatible. Similarly, the diagram~(\ref{basic}) yields the basic implications
\begin{center}
$\otimes_{\min}$-compatible $\Longrightarrow$ $\otimes_{\max}$-compatible $\Longrightarrow$ compatible .
\end{center}

\begin{prop}
\label{oncomp}
If both $A$ and $B$ are noncommutative and $A\dot{\otimes}B$ is any $C^*$-tensor product, then there is a pair of states which is compatible, but not $\dot{\otimes}$-compatible.
\end{prop}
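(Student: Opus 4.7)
The plan is to exhibit such a pair already at matrix level $k=2$, by putting half of an entangled state on each side, and then to rule out $\dot\otimes$-compatibility via the monogamy-of-entanglement argument packaged in Lemma~\ref{puresep}(ii).

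First I would apply Proposition~\ref{ent}(iii) to both $A$ and $B$: this produces entangled states $\alpha\in\mathscr{S}(M_2\otimes A)$ and $\beta\in\mathscr{S}(M_2\otimes B)$ with $\alpha_{|M_2}=\tr_2=\beta_{|M_2}$. A key point I would emphasize (and verify by inspecting the construction) is that these states are \emph{pure}: the proof of Proposition~\ref{ent}(ii) produces a vector state for the unit vector $(\xi_1\otimes\zeta_1+\xi_2\otimes\zeta_2)/\sqrt 2$, and the $M_2$-reduction of that vector state is the maximally mixed state by the direct calculation already given there. In particular the pair $(\alpha,\beta)$ is compatible.

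Now I would suppose for contradiction that there exists a witness $\gamma\in\mathscr{S}(M_2(A\dot\otimes B))$ with $\gamma_{|M_2(A)}=\alpha$ and $\gamma_{|M_2(B)}=\beta$. Using the canonical identification $M_2(A\dot\otimes B)\cong[M_2\otimes A]\,\dot\otimes\,B$ (available because $M_2$ is nuclear, so the extension from $A\dot\otimes B$ to its $2\times 2$ matrices does not depend on any choice of tensor norm), I regard $\gamma$ as a bipartite state on $[M_2\otimes A]\,\dot\otimes\,B$. Its reduced state on the left factor is $\alpha$, which is pure, so Lemma~\ref{puresep}(ii) applies and forces
\beq
\gamma \;=\; \alpha\otimes\delta \quad\text{for some }\delta\in\mathscr{S}(B).
\eeq
Reducing now to $M_2\otimes B$ gives $\beta=\gamma_{|M_2\otimes B}=\alpha_{|M_2}\otimes\delta=\tr_2\otimes\delta$, which is a product state. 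This contradicts the entanglement of $\beta$, finishing the proof.

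The only delicate point is the identification of $M_2(A\dot\otimes B)$ with $[M_2\otimes A]\,\dot\otimes\, B$, needed to bring $\gamma$ into the bipartite form required by Lemma~\ref{puresep}(ii); this is really just associativity of $C^*$-tensor products combined with the fact that tensoring by the nuclear algebra $M_2$ is unambiguous, but I would flag it explicitly. Everything else is a direct combination of earlier results: Proposition~\ref{ent}(iii) supplies the entanglement on both sides, compatibility of the marginals on $M_2$ is automatic because both equal $\tr_2$, and Lemma~\ref{puresep}(ii) provides the monogamy that kills any putative joint extension.
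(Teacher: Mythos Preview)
Your proof is correct and follows essentially the same monogamy-of-entanglement strategy as the paper. The only organizational difference is which side carries the purity: the paper applies Lemma~\ref{puresep}(ii) on the $[M_2\otimes B]$ factor (using that $\beta$ is pure) after invoking Lemma~\ref{tritobi} to propagate the entanglement of $\alpha$ to $\gamma$, whereas you apply Lemma~\ref{puresep}(ii) directly on the $[M_2\otimes A]$ factor (using that $\alpha$ is pure) and then read off that $\beta$ would have to be a product state. Both arguments rest on the same observation that the states produced in the proof of Proposition~\ref{ent}(ii)--(iii) are pure vector states, and your explicit flagging of the identification $M_2(A\dot\otimes B)\cong [M_2\otimes A]\,\dot\otimes'\,B$ is appropriate (it holds because any $C^*$-norm on an algebraic tensor product of $C^*$-algebras is automatically a cross norm).
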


\begin{proof}
By proposition~\ref{ent}(iii), there is some entangled state $\alpha$ on $M_2(A)=M_2\otimes A$ with $\alpha_{|M_2}=\tr_2$; and likewise, there is some pure state $\beta$ on $M_2\otimes B$ also reducing to $\tr_2$ on $M_2$. Hence these two states are compatible. However by lemma~\ref{tritobi}, any state $\gamma$ reducing to $\alpha$ has to be entangled with respect to the decomposition $A\dot{\otimes}\left[M_k\otimes B\right]$. However, this contradicts the purity of $\beta$ by lemma~\ref{puresep}(ii).
\end{proof}

This proof is a manifestation of the \emph{monogamy of entanglement} known from quantum entanglement theory: qualitatively, if one system is highly entangled with a second system, then it cannot be highly correlated with a third system. In the case of matrix algebras, quantitative versions of this statement have been derived in~\cite{KW}. Such considerations should in principle also facilitate a quantitative comparison of the set of $\dot{\otimes}$-compatible pairs with the set of compatible pairs, at least when both $A$ and $B$ are matrix algebras. However at the present stage, all results remain at a purely qualitative level.

\begin{prop}
For any $a+b\in M_k(A\dot{\oplus}B)$, the following are equivalent:
\begin{enumerate}
\item $a+b\geq 0$ in $M_k(A\dot{\otimes}B)$.
\item For every pair of states $\alpha\in\mathscr{S}(M_k(A))$, $\beta\in\mathscr{S}(M_k(B))$ which are $\dot{\otimes}$-compatible, it holds that $\alpha(a)+\beta(b)\geq 0$.
\end{enumerate}
\end{prop}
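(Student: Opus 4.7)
The statement is essentially a tautology once one unpacks the definition of $\dot{\otimes}$-compatibility, and the plan is to exploit that fact together with the standard principle that positivity in an operator system is detected by states (remark~\ref{separate}). The one subtle point to keep in mind is the identification of $a \in M_k(A)$ with its image $a \otimes \mathbbm{1}_B$ inside $M_k(A\dot{\otimes}B) = M_k \otimes A \otimes B$, and similarly for $b$; under this identification, $\gamma_{|M_k(A)}(a) = \gamma(a)$ for any state $\gamma$ on $M_k(A\dot{\otimes}B)$.

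For the direction (i) $\Rightarrow$ (ii), suppose $a+b \geq 0$ in $M_k(A\dot{\otimes}B)$ and let $(\alpha,\beta)$ be $\dot{\otimes}$-compatible, witnessed by $\gamma \in \mathscr{S}(M_k(A\dot{\otimes}B))$ with $\gamma_{|M_k(A)} = \alpha$ and $\gamma_{|M_k(B)} = \beta$. Then by the identification above, $\alpha(a) + \beta(b) = \gamma(a) + \gamma(b) = \gamma(a+b) \geq 0$.

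For the direction (ii) $\Rightarrow$ (i), invoke remark~\ref{separate} applied to $M_k(A\dot{\otimes}B)$: it suffices to show $\gamma(a+b) \geq 0$ for every state $\gamma \in \mathscr{S}(M_k(A\dot{\otimes}B))$. Given such a $\gamma$, form $\alpha := \gamma_{|M_k(A)}$ and $\beta := \gamma_{|M_k(B)}$. By construction this pair is $\dot{\otimes}$-compatible (with $\gamma$ itself as the witness), so the hypothesis in (ii) gives $\alpha(a) + \beta(b) \geq 0$. Again by the identification of $a$ and $b$ with their images in $M_k(A\dot{\otimes}B)$, this quantity equals $\gamma(a+b)$, completing the argument.

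There is no real obstacle here; the only thing that needs care is making explicit that the passage $\gamma \mapsto (\gamma_{|M_k(A)}, \gamma_{|M_k(B)})$ surjects onto all $\dot{\otimes}$-compatible pairs (by definition) and preserves the value of $a+b$. Once this bookkeeping is in place, both implications reduce to a one-line computation.
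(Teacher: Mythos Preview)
Your proof is correct and matches the paper's own argument essentially line for line; both directions use the same restriction/extension of states, with the only difference being that you spell out the invocation of remark~\ref{separate} and the identification $a\leftrightarrow a\otimes\mathbbm{1}$ explicitly where the paper leaves them implicit.
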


\begin{proof}
Assume (i). Then given any $\dot{\otimes}$-compatible pair $(\alpha,\beta)$, there is a state $\gamma\in\mathscr{S}\left(M_k(A\dot{\otimes}B)\right)$ which extends these two. Hence,
\beq
0\leq \gamma(a+b)=\alpha(a)+\beta(b)\:,
\eeq
and therefore (ii).

Given (ii), it is clear that $\gamma(a+b)\geq 0$ for any state $\gamma\in\mathscr{S}\left(M_k(A\dot{\otimes}B)\right)$, and therefore (i).
\end{proof}

\begin{cor}
$A\otimes_{\min}B=A\otimes_{\max}B$ if and only if
\begin{center}
$\otimes_{\min}$-compatible $\Longleftrightarrow$ $\otimes_{\max}$-compatible\:.
\end{center}
\end{cor}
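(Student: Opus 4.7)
The plan is to combine the characterization of positivity in $M_k(A\dot\oplus B)$ from the immediately preceding proposition with Pisier's trick (Proposition~\ref{onlyplus}). The key observation is that the preceding proposition says exactly that the set of $\dot\otimes$-compatible pairs at each matrix level determines, and is determined by (up to closed convex hull separation), the positive cones of $A\dot\oplus B$. So the equivalence of the two compatibility notions should be precisely the equivalence of the operator system structures $A\oplus_{\min}B$ and $A\oplus_{\max}B$, which by Proposition~\ref{onlyplus} is the same as the equality of the $C^*$-tensor products.

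For the forward direction, assuming $A\otimes_{\min}B = A\otimes_{\max}B$, I would just note that then the state spaces $\mathscr{S}(M_k(A\otimes_{\min}B))$ and $\mathscr{S}(M_k(A\otimes_{\max}B))$ are literally the same set, so a pair $(\alpha,\beta)$ extends to a state on one iff it extends to a state on the other; hence $\otimes_{\min}$-compatibility and $\otimes_{\max}$-compatibility coincide.

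For the reverse direction, assume the two compatibility notions are equivalent. By the preceding proposition applied once to $\dot\otimes = \otimes_{\min}$ and once to $\dot\otimes = \otimes_{\max}$, the positive cones $M_k(A\oplus_{\min}B)_+$ and $M_k(A\oplus_{\max}B)_+$ are characterized by the \emph{same} inequalities $\alpha(a)+\beta(b)\geq 0$ ranging over the same set of pairs. Therefore these cones agree for every $k$, which means $A\oplus_{\min}B = A\oplus_{\max}B$ completely order isomorphically. Proposition~\ref{onlyplus} then upgrades this operator system equality to the $C^*$-algebra equality $A\otimes_{\min}B = A\otimes_{\max}B$, closing the loop.

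There should be no real obstacle here: both directions are one-liners once the preceding proposition and Proposition~\ref{onlyplus} are in hand. The only point worth being careful about is that "$\otimes_{\min}$-compatible $\Longleftrightarrow$ $\otimes_{\max}$-compatible" must be read as holding at \emph{every} matrix level $k$, since the characterization of positive matrix cones in the preceding proposition uses compatible pairs in $M_k(A)$ and $M_k(B)$. Given the way $\dot\otimes$-compatibility was defined (with an implicit $k$), this is the natural reading, and it is what the statement requires.
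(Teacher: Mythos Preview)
Your proposal is correct and follows exactly the approach of the paper, which simply cites the preceding proposition together with Proposition~\ref{onlyplus}. Your explicit unpacking of both directions, and your remark that the compatibility hypothesis must be read at every matrix level $k$, are accurate elaborations of what the paper leaves implicit.
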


\begin{proof}
This follows from the previous proposition in combination with proposition~\ref{onlyplus}.
\end{proof}

\paragraph{A non-complete positive isomorphism.}

\begin{prop}
\label{main}
The natural maps
\beq
A\oplus_\ast B\lra A\oplus_{\max}B\lra A\oplus_{\min}B 
\eeq
are isomorphisms of ordered vector spaces. However when $A$ and $B$ are both noncommutative, the first map is not an isomorphism of operator system: there are elements in $M_2(A\oplus_{\max}B)$ which are not positive in $M_2(A\oplus_\ast B)$.
\end{prop}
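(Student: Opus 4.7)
The plan is to treat the order-isomorphism statement and the $k=2$ failure separately.

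For the first part, the three spaces coincide as vector spaces by the discussion in Section~\ref{sec2}, and the natural maps are ucp via~(\ref{basic}) so they preserve positivity. To check that they also \emph{reflect} positivity at $k=1$, I would invoke Proposition~\ref{freepos}(iii) for $A\oplus_\ast B$ together with the proposition immediately preceding the one at hand for $A\dot\oplus B$: in each case positivity of $a+b$ is equivalent to $\alpha(a)+\beta(b)\geq 0$ holding over the corresponding class of pairs, compatible or $\dot\otimes$-compatible. At $k=1$, every pair of states is automatically $\otimes_{\min}$-compatible via the product state $\alpha\otimes\beta\in\mathscr{S}(A\otimes_{\min}B)$, hence a fortiori $\otimes_{\max}$-compatible and compatible, so all three positivity conditions test against the same universal set of pairs and therefore agree.

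For the second part the strategy is Hahn--Banach separation. Applied with $k=2$, Proposition~\ref{oncomp} furnishes a pair $(\alpha_0,\beta_0)\in\mathscr{S}(M_2(A))\times\mathscr{S}(M_2(B))$ which is compatible but not $\otimes_{\max}$-compatible. Let $K_{\max}$ denote the set of all $\otimes_{\max}$-compatible pairs inside $\mathscr{S}(M_2(A))\times\mathscr{S}(M_2(B))$. As the image of the weak$*$-compact space $\mathscr{S}(M_2(A\otimes_{\max}B))$ under the continuous restriction map, $K_{\max}$ is weak$*$-compact and convex, and by construction avoids $(\alpha_0,\beta_0)$. Hahn--Banach separation then yields self-adjoint $a\in M_2(A)$, $b\in M_2(B)$ and some $c\in\R$ with
\beq
\tilde\alpha(a)+\tilde\beta(b)>c\quad\text{for all }(\tilde\alpha,\tilde\beta)\in K_{\max},\qquad\alpha_0(a)+\beta_0(b)<c.
\eeq
Replacing $a$ by $a-\tfrac{c}{2}\mathbbm{1}_A$ and $b$ by $b-\tfrac{c}{2}\mathbbm{1}_B$ shifts each expression by $-c$ (since states are unital) and reduces us to the case $c=0$. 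The element $a+b\in M_2(A\oplus_1 B)$ is then positive in $M_2(A\oplus_{\max}B)$ by the characterization in terms of $\otimes_{\max}$-compatible pairs, while Proposition~\ref{freepos}(iii) applied to the compatible pair $(\alpha_0,\beta_0)$ witnesses that it is not positive in $M_2(A\oplus_\ast B)$.

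The main technical ingredient is the weak$*$-compactness of $K_{\max}$, which is what allows strict separation and thereby the clean reduction to $c=0$; it rests only on compactness of $\mathscr{S}(M_2(A\otimes_{\max}B))$ and weak$*$-continuity of the restriction map on states. One should also note in passing that the pairing $(\alpha,\beta)\mapsto\alpha(a)+\beta(b)$ against a \emph{compatible} pair descends automatically to the quotient $M_2(A\oplus_1 B)$, since replacing $a$ by $a+\lambda\otimes\mathbbm{1}_A$ and $b$ by $b-\lambda\otimes\mathbbm{1}_B$ for $\lambda\in M_2$ changes the two terms by cancelling quantities. Beyond this, no new ingredient is required: the substantive content has already been encoded in Propositions~\ref{freepos} and~\ref{oncomp} and the characterization of positivity in $A\dot\oplus B$.
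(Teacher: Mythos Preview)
Your argument is correct and rests on the same two ingredients as the paper's proof: the characterizations of positivity in $A\oplus_\ast B$ and $A\dot\oplus B$ via (respectively) compatible and $\dot\otimes$-compatible pairs, together with Proposition~\ref{oncomp}. For the first assertion, your argument and the paper's are identical up to phrasing: at $k=1$ every pair of states is $\otimes_{\min}$-compatible via the product state, so all three positivity tests collapse to the same one.

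For the second assertion the paper takes the contrapositive route: if the positive cones in $M_2$ coincided, then the state spaces $\mathscr{S}(M_2(A\oplus_\ast B))$ and $\mathscr{S}(M_2(A\oplus_{\max}B))$ would coincide, and since these are exactly the compatible (resp.\ $\otimes_{\max}$-compatible) pairs, Proposition~\ref{oncomp} gives the contradiction directly---no separating element is ever produced. Your version runs the same duality forward instead of backward: you separate the non-$\otimes_{\max}$-compatible pair $(\alpha_0,\beta_0)$ from the compact convex set $K_{\max}$ and thereby explicitly exhibit the offending $a+b$. This buys you a concrete witness at the cost of an extra paragraph (compactness of $K_{\max}$, passage to self-adjoint $a,b$, shift to $c=0$); the paper's version is shorter but leaves the correspondence between states on $M_2(A\dot\oplus B)$ and $\dot\otimes$-compatible pairs implicit. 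Either way the content is the same.

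One small notational slip: when you write ``replacing $a$ by $a-\tfrac{c}{2}\mathbbm{1}_A$'', the unit should be that of $M_2(A)$, i.e.\ $\mathbbm{1}_2\otimes\mathbbm{1}_A$, since $a\in M_2(A)$.
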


\begin{proof}
It has already been noted that both maps are ucp by~(\ref{basic}). Hence for the second assertion, we have to check that the positive cone on $A\oplus_{\min}B$ is not bigger than the one on $A\oplus_\ast B$.

So suppose that $a+b\geq 0$ in $A\oplus_{\min}B$. It has to be shown that also $a+b\geq 0$ in $A\oplus_\ast B$. To this end, it is sufficient by proposition~\ref{freepos} to check that $\alpha(a)+\beta(b)\geq 0$ for any states $\alpha\in\mathscr{S}(A)$ and $\beta\in\mathscr{S}(B)$. But this is clear by assumption, since on $A\otimes_{\min}B$ we have that
\beq
\alpha(a)+\beta(b)=(\alpha\otimes\beta)(a\otimes\mathbbm{1}+\mathbbm{1}\otimes b)\geq 0.
\eeq

For the second assertion, we assume to the contrary that it would indeed be a complete isomorphism. This would mean that
\beq
\mathscr{S}\left(M_2(A\oplus_{\ast}B)\right)=\mathscr{S}\left(M_2(A\oplus_{\max}B)\right) ,
\eeq
implying that a compatible pair of states is automatically $\otimes_{\max}$-compatible. However by proposition~\ref{oncomp}, we know that this is not the case when both $A$ and $B$ are noncommutative.
\end{proof}

\paragraph{Drawing on symmetries.} In this subsection, we consider the operator systems $A\oplus_\ast B$, $A\oplus_{\max}B$ and $A\oplus_{\min}B$ as operator spaces. Recall that any operator system inherits an operator space structure, for example by embedding it into some $\B(\H)$ and noting that the matricial norms do not depend on the choice of embedding. Since our operator systems have been defined as subspaces of $C^*$-algebras, they directly come equipped with their natural operator space structures.

In cases where the $C^*$-algebras under examination have a high degree of symmetry, there is a tight comparison that can be done between the norm $||\cdot||_\ast$ on $A\oplus_*B$ induced from the unital free product $A\ast_1 B$ and the norm $||\cdot||_{\min}$ induced from the $C^*$-algebra tensor product $A\otimes_{\min}B$.

We start by outlining the general idea with normed spaces. Given normed spaces $E$ and $F$, there are several natural ways to equip the direct sum $E\oplus F$ with a norm, with the most common ones being the following:
\beq
||e+f||\equiv\max\left\{||e||,||f||\right\},\quad\textrm{or}\quad||e+f||\equiv\sqrt{||e||^2+||f||^2},\quad\textrm{or}\quad||e+f||\equiv||e||+||f||\:.
\eeq
All of these satisfy the important property that switching the sign of either component $e$ or $f$ does not change the value of the norm. The following simple proprosition highlights the importance and utility of this property for the comparison of norms.

\begin{prop}
\label{symprop}
Let $E$ and $F$ be  normed spaces and $||\cdot||_{\mathrm{sym}}$ a norm on $E\oplus F$ which restricts to the given norms on $E$ and $F$. If $||\cdot||_{\mathrm{sym}}$ satsfies the symmetry condition
\beqn
\label{symcond}
||e-f||_{\mathrm{sym}}=||e+f||_{\mathrm{sym}}\qquad\forall\: e\in E,\: f\in F\:,
\eeqn
and $||\cdot||_{\mathrm{any}}$ is any other norm on $E\oplus F$ which also extends the given norms on $E$ and $F$, then
\beq
||e+f||_{\mathrm{any}}\leq 2||e+f||_{\mathrm{sym}}\:.
\eeq
\end{prop}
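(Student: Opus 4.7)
The plan is to exploit the symmetry hypothesis to show that for $e\in E$ and $f\in F$, each of the individual norms $\|e\|_E$ and $\|f\|_F$ is bounded above by $\|e+f\|_{\mathrm{sym}}$. Once this is established, the bound on $\|e+f\|_{\mathrm{any}}$ follows at once from the triangle inequality.

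More concretely, I would proceed in three short steps. First, use the algebraic identity $e = \tfrac{1}{2}\bigl((e+f)+(e-f)\bigr)$ together with the triangle inequality for $\|\cdot\|_{\mathrm{sym}}$ to obtain
\[
\|e\|_{\mathrm{sym}} \;\leq\; \tfrac{1}{2}\bigl(\|e+f\|_{\mathrm{sym}} + \|e-f\|_{\mathrm{sym}}\bigr) \;=\; \|e+f\|_{\mathrm{sym}},
\]
where the equality uses the symmetry hypothesis \eqref{symcond}. Since $\|\cdot\|_{\mathrm{sym}}$ restricts to the given norm on $E$, this reads $\|e\|_E \leq \|e+f\|_{\mathrm{sym}}$. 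Applying the same argument with $f = \tfrac{1}{2}\bigl((e+f)-(e-f)\bigr)$ yields $\|f\|_F \leq \|e+f\|_{\mathrm{sym}}$.

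Second, since $\|\cdot\|_{\mathrm{any}}$ also extends the given norms on $E$ and $F$, the triangle inequality for $\|\cdot\|_{\mathrm{any}}$ gives
\[
\|e+f\|_{\mathrm{any}} \;\leq\; \|e\|_{\mathrm{any}} + \|f\|_{\mathrm{any}} \;=\; \|e\|_E + \|f\|_F.
\]
Combining this with the two inequalities from the first step produces the desired bound $\|e+f\|_{\mathrm{any}} \leq 2\|e+f\|_{\mathrm{sym}}$.

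There is no real obstacle here: the whole argument rests on the observation that the parallelogram-like identity lets one recover each summand from the pair $(e+f,e-f)$, and that the symmetry hypothesis collapses both terms into a single copy of $\|e+f\|_{\mathrm{sym}}$. The only thing to be slightly careful about is invoking the hypothesis that both norms restrict to the given norms on $E$ and $F$ at the right points, but this is automatic from the statement.
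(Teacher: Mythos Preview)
Your proof is correct and follows essentially the same approach as the paper: both use the decompositions $e=\tfrac{1}{2}((e+f)+(e-f))$ and $f=\tfrac{1}{2}((e+f)-(e-f))$, apply the triangle inequality for $\|\cdot\|_{\mathrm{sym}}$ together with the symmetry condition, and then bound $\|e+f\|_{\mathrm{any}}$ by $\|e\|_E+\|f\|_F$. The paper compresses this into a single chain of inequalities, while you separate it into two steps, but the content is identical.
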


In particular, all direct sum norms with the symmetry property are equivalent and differ by a factor of at most $2$.

\begin{proof}
\begin{align*}
||e+f||_{\mathrm{any}}&\leq||e||_E+||f||_F\\\\
&=\left|\left|\frac{1}{2}\left(e+f\right)+\frac{1}{2}\left(e-f\right)\right|\right|_E+\left|\left|\frac{1}{2}\left(e+f\right)+\frac{1}{2}\left(-e+f\right)\right|\right|_F\\\\
&\leq ||e+f||_{\mathrm{sym}}+||e-f||_{\mathrm{sym}}=2||e-f||_{\mathrm{sym}}
\end{align*}
\end{proof}

\begin{defn}[$\Z_2$-graded $C^*$-algebra]
We call $A$, a unital $C^*$-algebra, \emph{$\Z_2$-graded} if it comes equipped with a direct sum decomposition
\beq
A=A_0\oplus A_1,\qquad A_0^*=A_0, \quad A_1^*=A_1,\quad \mathbbm{1}\in A_0,
\eeq
\beq
A_i\cdot A_j\subseteq A_{(i+j)\:\mathrm{mod}\:2}\:.
\eeq
\end{defn}

Every such decomposition defines an automorphism $\sigma:A\ra A$ by setting $\sigma_{|A_0}\equiv\id_{A_0}$ and $\sigma_{|A_1}\equiv -\id_{A_1}$. This automorphism has order $2$. Conversely, every automorphism of order $2$ yields a splitting of $A$ into corresponding subspaces $A_0$ and $A_1$, thereby defining a $\Z_2$-grading.

\begin{ex}
Let $G=\langle\,\Gamma\,|\,R\,\rangle$ be a discrete group with generators $\Gamma$ and relations $R$ such that every generator occurs in each relation an even number of times. ($0$ is even.) Then $C^*(G)$ (or $C^*_r(G)$) is $\Z_2$-graded by splitting it into the closed linear span of reduced words of even length and the closed linear span of reduced words of odd length. In particular, this applies to $G=\F_n$ for any finite or infinite cardinal $n$.
\end{ex}

If we do not consider all elements of $A$ for taking the unital direct sum, but restrict to the subspace of odd ones, then the simple consideration of proposition~\ref{symprop} gives a strong bound:

\begin{prop}
\label{gradmain}
Let $A$ be a unital $\Z_2$-graded $C^*$-algebras and $B$ any unital $C^*$-algebra. Then for $a+b\in M_k(A\oplus_1 B)$ with $a\in M_k(A_1)$ an odd element, we have
\beq
||a+b||_{\min}\leq||a+b||_{\max}\leq||a+b||_{\ast}\leq 2||a+b||_{\min}\:.
\eeq
In particular, all three operator space structures on $A_1\oplus B\subseteq A\oplus_1 B$ are completely isomorphic.
\end{prop}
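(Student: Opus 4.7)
The plan is to exploit the $\Z_2$-grading through the general comparison principle of Proposition~\ref{symprop}, applied at every matrix level.

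First, the two leftmost inequalities $\|a+b\|_{\min} \le \|a+b\|_{\max} \le \|a+b\|_*$ are immediate from diagram~(\ref{basic}): the horizontal arrows there are ucp and hence completely contractive, so they cannot increase matricial norms.

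For the upper bound $\|a+b\|_* \le 2\|a+b\|_{\min}$, the key observation is that the grading automorphism $\sigma$ of $A$ lifts to an isometric $*$-automorphism of each of $A\ast_1 B$, $A\otimes_{\max}B$ and $A\otimes_{\min}B$, namely $\sigma\ast_1 \id_B$ (by the universal property of the unital free product) and $\sigma\otimes\id_B$ (by the universal property of $\otimes_{\max}$ and, for $\otimes_{\min}$, by the fact that tensoring $*$-isomorphisms yields an isometry on the spatial norm). Tensoring further by $\id_{M_k}$, these lift to isometric $*$-automorphisms of $M_k$ of each of the three algebras. Applied to $a + b$ with $a\in M_k(A_1)$ odd and $b\in M_k(B)$, each of them sends $a+b$ to $-a+b$. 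Hence on the subspace $M_k(A_1)\oplus M_k(B)\subseteq M_k(A\oplus_1 B)$, all three norms $\|\cdot\|_\ast,\|\cdot\|_{\max},\|\cdot\|_{\min}$ satisfy the symmetry condition~(\ref{symcond}), with $E=M_k(A_1)$ and $F=M_k(B)$ equipped with their natural $C^*$-norms.

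Now I invoke Proposition~\ref{symprop} at each level $k$ with $\|\cdot\|_{\mathrm{sym}}=\|\cdot\|_{\min}$ (which is symmetric by the above and restricts to the right norms on $M_k(A_1)$ and $M_k(B)$) and $\|\cdot\|_{\mathrm{any}}=\|\cdot\|_\ast$: this yields directly $\|a+b\|_\ast \le 2\|a+b\|_{\min}$. Since the argument and the constant $2$ are uniform in $k$, this simultaneously gives that the three operator space structures on $A_1\oplus B$ are completely isomorphic, as claimed.

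The only place where some care is needed is the step that $\sigma$ really does lift to an isometric $*$-automorphism on each of the three $C^*$-algebras — which for the free product and the max tensor product is a one-line consequence of their universal properties applied to $\sigma$ and its inverse $\sigma^{-1}=\sigma$, and for the min tensor product follows from the standard fact that $\|\sigma\otimes\id_B\|$ is isometric on the spatial norm because $\sigma$ is. Everything else is a routine application of Proposition~\ref{symprop} and diagram~(\ref{basic}).
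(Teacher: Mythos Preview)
Your proof is correct and follows essentially the same approach as the paper: both reduce the first two inequalities to diagram~(\ref{basic}), observe that the grading automorphism $\sigma$ extends functorially to each of the ambient $C^*$-algebras (the paper says ``by functoriality'', you spell out the universal-property argument), deduce the symmetry condition~(\ref{symcond}) on $M_k(A_1)\oplus M_k(B)$, and then invoke Proposition~\ref{symprop}. The only point the paper makes slightly more explicit is that $A_1+B$ is genuinely a direct sum (since $\mathbbm{1}\in A_0$, so $A_1\cap\C\mathbbm{1}=\{0\}$), which is implicit in your notation $M_k(A_1)\oplus M_k(B)$.
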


\begin{proof}
Since the first two inequalities are clear by~(\ref{basic}), it remains to prove that $||a+b||_{\ast}\leq 2||a+b||_{\min}$ for $a\in M_k(A_1)$ and $b\in M_k(B)$. To this end, note first that the sum $A_1+B\subseteq A\oplus_{\min} B$ is actually a direct sum, so that we have a direct sum splitting
\beqn
\label{subspace}
M_k(A_1\oplus B)=M_k(A_1)\oplus M_k(B)
\eeqn
We have induced norms on this vector space coming from its embeddings into $M_k(A\ast_1 B)$ and $M_k(A\otimes_{\min}B)$. Since functoriality shows these norms to be invariant under the grading automorphism on $A$, it follows that both induced norms on~(\ref{subspace}) have the symmetry property~(\ref{symcond}). Hence the assertion follows from proposition~\ref{symprop}.
\end{proof}

\section{Implications for $C^*(\F_n)\otimes C^*(\F_n)$}
\label{qwepsec}

The main motivation for obtaining the previous results was to study properties of $C^*$-tensor norms on $C^*(\F_n)\otimes C^*(\F_n)$. We use similar notation as~\cite{Radu} by setting
\beq
\mathcal{X}=C^*(\F_n)\otimes\mathbbm{1}+\mathbbm{1}\otimes C^*(\F_n)
\eeq
and
\beq
\mathcal{X}_1=C^*(\F_n)_1\otimes\mathbbm{1}+\mathbbm{1}\otimes C^*(\F_n)\:,
\eeq
where $C^*(\F_n)_1\subseteq C^*(\F_n)$ is the closed subspace spanned by the reduced words of odd length. Then our propositions~\ref{main} and~\ref{gradmain} readily yield the following:

\begin{thm}
Let $n\geq 2$ be any cardinal. We consider $\mathcal{X}$ and $\mathcal{X}_1$ as operator subspaces of
\beq
C^*(\F_{2n}),\quad C^*(\F_n)\otimes_{\max}C^*(\F_n),\quad\textrm{and}\quad C^*(\F_n)\otimes_{\min}C^*(\F_n)\:.
\eeq
Then we have,
\begin{enumerate}
\item For $X\in\mathcal{X}$, all three norms coincide:
\beq
||X||_{C^*(\F_{2n})}=||X||_{\max}=||X||_{\min}\:.
\eeq
\item There are $X\in M_2(\mathcal{X})$ for which
\beq
||X||_{C^*(\F_{2n})}>||X||_{\max}\:.
\eeq
\item For $X\in M_k(\mathcal{X}_1)$, all three norms are equivalent:
\beq
||X||_{\min}\leq ||X||_{\max}\leq ||X||_{C^*(\F_{2n})}\leq 2||X||_{\min}\:.
\eeq
\end{enumerate}
\end{thm}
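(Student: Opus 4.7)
Using the identification $C^*(\F_n)\ast_1 C^*(\F_n)=C^*(\F_{2n})$ (the full $C^*$-algebra of a free product of groups is the unital free product of the full group $C^*$-algebras), the three ambient algebras of the theorem are $A\ast_1 B$, $A\otimes_{\max}B$, $A\otimes_{\min}B$ for $A=B=C^*(\F_n)$, and $\mathcal{X}$, $\mathcal{X}_1$ are the abstract subspaces studied in Section~\ref{sec5}. Each part of the theorem will be read off from the general results of that section.

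For part~(i), the inequalities $\|X\|_{\min}\leq\|X\|_{\max}\leq\|X\|_{C^*(\F_{2n})}$ are immediate from contractivity of the quotient $*$-homomorphisms in the bottom row of~(\ref{basic}). The converse for self-adjoint $X$ follows from Proposition~\ref{main}: the three scalar-level orders on $\mathcal{X}$ coincide, the Archimedean order unit $\mathbbm{1}$ is shared, and hence the order-unit norms---which agree with the $C^*$-norms on self-adjoint elements---coincide. For a general $X\in\mathcal{X}$ the argument has to be extended; the natural route is via the Schur-complement identity
\beq
\|X\|=\inf\left\{\lambda\geq 0 : \begin{pmatrix}\lambda\mathbbm{1}&X\\X^*&\lambda\mathbbm{1}\end{pmatrix}\geq 0\right\},
\eeq
which reduces the problem to showing that this specific $2\times 2$ matrix has a common positivity threshold across the three systems, despite the fact (see part~(ii)) that arbitrary $M_2$-positivities do not transfer.

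For part~(ii), the second assertion of Proposition~\ref{main} supplies a self-adjoint $Y\in M_2(\mathcal{X})$ which is positive in $M_2(A\oplus_{\max}B)$ but not in $M_2(A\oplus_\ast B)$. Normalise so $\|Y\|_{\max}=1$, giving $\mathrm{spec}_{\max}(Y)\subseteq[0,1]$. Because the quotient $*$-homomorphism $A\ast_1 B\to A\otimes_{\max}B$ can only shrink spectrum, $\mathrm{spec}_{\max}(Y)\subseteq\mathrm{spec}_\ast(Y)$, and the failure of $\ast$-positivity of $Y$ forces some $\lambda_-<0$ to lie in $\mathrm{spec}_\ast(Y)$. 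Translating by $\tfrac{1}{2}\mathbbm{1}$, the element $X:=Y-\tfrac{1}{2}\mathbbm{1}\in M_2(\mathcal{X})$ then satisfies $\|X\|_{\max}\leq\tfrac{1}{2}$ while $\|X\|_{C^*(\F_{2n})}\geq\tfrac{1}{2}-\lambda_->\tfrac{1}{2}$, which is the required strict inequality.

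Part~(iii) is a direct application of Proposition~\ref{gradmain}: $A=C^*(\F_n)$ is $\Z_2$-graded by length-parity of reduced words (as in the Example preceding~\ref{gradmain}), $B=C^*(\F_n)$ is taken without grading, and the proposition applied to an element of $M_k(A_1\oplus_1 B)=M_k(\mathcal{X}_1)$ yields the three-term chain of inequalities verbatim. The main obstacle I expect is the non-self-adjoint portion of~(i): since a blanket $M_2$-order equality would contradict~(ii), the argument must exploit the particular structure of the Schur-complement matrix above rather than any general matricial positivity.
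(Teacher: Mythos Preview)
Your outline is exactly the paper's approach: the theorem is stated there as an immediate consequence of Propositions~\ref{main} and~\ref{gradmain}, and you invoke precisely those two results. Parts~(ii) and~(iii) are handled correctly. Your spectral-shift construction in~(ii) is a clean way to pass from the order discrepancy supplied by Proposition~\ref{main} to a strict norm inequality; one could argue more briefly that a difference in the $M_2$-positive cones forces a difference in the order-unit (hence $C^*$-)norms on self-adjoint elements of $M_2(\mathcal{X})$, but your explicit element is fine and slightly more informative. Part~(iii) is a verbatim application of Proposition~\ref{gradmain}, as you say.

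The concern you flag about part~(i) for non-self-adjoint $X$ is legitimate and is not a defect of your proposal relative to the paper. Proposition~\ref{main} only asserts that the \emph{scalar} positive cones agree, which yields equality of the order-unit norms on self-adjoint elements; for a general $X$ the $C^*$-norm is governed by the $M_2$-order via the matrix $\left(\begin{smallmatrix}\lambda\mathbbm{1}&X\\X^*&\lambda\mathbbm{1}\end{smallmatrix}\right)$, and the second half of Proposition~\ref{main} says precisely that the $M_2$-orders do \emph{not} coincide. So there is no blanket reason for the positivity threshold of this particular matrix to be shared, and your Schur-complement route does not close by itself. The paper's own proof is equally terse here---it simply says the propositions ``readily yield'' the theorem---so what is delivered unambiguously is the self-adjoint case of~(i); extending to arbitrary $X\in\mathcal{X}$ would require an additional argument exploiting the special form of that $2\times 2$ matrix, which neither the paper nor your proposal supplies.
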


In particular, the last inequality is a significant improvement over R\u{a}dulescu's result~\cite{Radu}, since both the subspace considered is bigger and the bound is tighter.

However, in order to obtain this result, we have made essentially no use of the geometry of the free group, or properties of partially commuting sets of unitaries, or anything else specific to $C^*(\F_n)$. Hence it seems unlikely that the QWEP conjecture could be proven along similar lines. In fact, the same arguments also show the following, in the analogous notation: for any
\beq
X\in M_k\left(C_r^*(\F_n)_1 \otimes\mathbbm{1}+\mathbbm{1}\otimes C_r^*(\F_n)\right)\:,
\eeq
the estimate
\beqn
\label{tc}
||X||_{\min}\leq ||X||_{\max}\leq||X||_{C_r^*(\F_{2n})}\leq 2||X||_{\min}
\eeqn
is valid; however, for the reduced group $C^*$-algebra $C_r^*(\F_n)$, it is well-known (see e.g.~\cite[11.3.14]{KR2} for $n=2$) that
\beq
C_r^*(\F_{n})\otimes_{\min}C_r^*(\F_{n})\neq C_r^*(\F_{n})\otimes_{\max}C_r^*(\F_{n})\:,
\eeq
despite the tight comparison\eq{tc}.

\bibliographystyle{halpha}
\bibliography{comparison_norms}

\end{document}